\newcommand{\R}{{\mathbb{R}}}
\newcommand{\N}{{\mathbb{N}}}
\newcommand{\F}{{\mathcal{F}(X)}}
\newcommand{\eps}{\varepsilon}
\newcommand{\loglike}[1]{\mathop{\rm #1}\nolimits}
\newcommand{\Lip}{\loglike{Lip}_0}
\newcommand{\SA}{\loglike{SA}}
\newcommand{\DA}{\loglike{DA}}
\newcommand{\LDA}{\loglike{LDA}}
\newcommand{\dist}{\mathrm{dist}}
\newcommand{\diam}{\mathrm{diam}}
\newcommand{\1}{{\bf 1}}
\newcommand{\conv}{{\mathrm{conv}}}
\newcommand{\cconv}{\overline{\mathrm{conv}}}
\newcommand{\LipBPB}{ \loglike{LipBPB}}
\newcommand{\LLipBPB}{ \loglike{LLipBPB}}
\newcommand{\bea}{\begin{eqnarray*}}
\newcommand{\eea}{\end{eqnarray*}}
\newcommand{\beq}{\begin{equation}}
\newcommand{\eeq}{\end{equation}}
\newcommand{\supp}{{\mathrm{supp}}}
\newcommand{\sign}{{\mathrm{sign}}}
\numberwithin{equation}{section}
\theoremstyle{plain}
\newtheorem{theorem}{Theorem}[section]
\newtheorem{corollary}[theorem]{Corollary}
\newtheorem{prop}[theorem]{Proposition}
\newtheorem{lemma}[theorem]{Lemma}
\theoremstyle{definition}
\newtheorem{remark}[theorem]{Remark}
\newtheorem{example}[theorem]{Example}
\newtheorem{definition}[theorem]{Definition}
\renewcommand{\leq}{\leqslant}
\renewcommand{\geq}{\geqslant}
\renewcommand{\le}{\leqslant}
\renewcommand{\ge}{\geqslant}
\begin{document}
\setcounter{page}{1}

\title{Norm attaining Lipschitz functionals}
\author[V.~Kadets, M.~Mart\'{\i}n, M.~Soloviova]{Vladimir Kadets$^1$, Miguel Mart\'{\i}n$^2$, and Mariia Soloviova$^1$}

\address{$^1$ Department of Mathematics and Informatics, Kharkiv V.~N.~Karazin National University, pl.~Svobody~4,
61022~Kharkiv, Ukraine}
\email{\textcolor[rgb]{0.00,0.00,0.84}{vova1kadets@yahoo.com, mariiasoloviova93@gmail.com}}

\address{$^2$ Departamento de An\'{a}lisis Matem\'{a}tico \\ Facultad de
 Ciencias \\ Universidad de Granada \\ 18071 Granada\\ Spain }
\email{\textcolor[rgb]{0.00,0.00,0.84}{mmartins@ugr.es}}

\subjclass[2010]{Primary 46B04; Secondary 46B20, 46B22, 47A30}

\keywords{Bishop-Phelps-Bollob\'{a}s theorem; norm attaining functional; Lipschitz-free space; Lipschitz functional; uniformly convex Banach space}

\date{January 28th, 2016}

\begin{abstract}
We prove that for a given Banach space $X$, the subset of norm attaining  Lipschitz functionals in $\Lip(X)$ is weakly dense but not strongly dense. Then we introduce a weaker concept of directional norm attainment and demonstrate that for a uniformly convex  $X$ the set of directionally  norm attaining  Lipschitz functionals is strongly dense in $\Lip(X)$ and, moreover, that an analogue of the Bishop-Phelps-Bollob\'{a}s theorem is valid.
\end{abstract}

\maketitle

\section{Introduction and motivation}
In this text, the letter $X$ stands for a real Banach space. We denote, as usual, by $S_X$ and $B_X$  the unit sphere and the closed unit ball of $X$, respectively.  A functional $x^* \in X^*$ \emph{attains its norm}, if there is $x \in S_X$ with $x^*(x) = \|x^*\|$. If $X$ is reflexive, then all  $x^* \in X^*$ attain their norms and, according to the famous James theorem (see \cite[Chapter 1, theorem 3]{Diestel}), in every non-reflexive space there are functionals that do not attain their norm. Nevertheless, in every Banach space there are ``many'' norm attaining functional. Namely, the classical Bishop-Phelps theorem  (\cite{Bishop-Phelps}, see also \cite[Chapter 1]{Diestel})  states that the set of norm attaining functionals on a Banach space is norm dense in the dual space. Moreover, for every closed bounded convex set $C \subset X$, the collection of functionals that attain their maximum on $C$ is  norm dense in $X^*$.

The fact that every functional can be approximated by norm attaining ones is quite useful, but sometimes one needs more. Namely, sometimes (in particular, when one works with numerical radius of operators) one needs to approximate a pair ``element and functional'' by a pair  $(x, x^*)$ such that $x^*$ attains its norm in $x$. Such a modification of the Bishop-Phelps theorem was given by  B.~Bollob\'{a}s
\cite{Bollobas}. Below we cite it in a slightly modified form with sharp estimates  \cite{C-K-M-M-R}.

\begin{theorem}[Bishop-Phelps-Bollob\'{a}s theorem]\label{th:BPB}
Let $X$ be a Banach space. Suppose $x\in B_X$ and $x^*\in B_{X^*}$
satisfy $ x^*(x) \ge 1 -  \delta$ for $\delta \in (0, 2)$. Then there exists $(y,y^*)\in X\times X^*$ with $\|y\|=\|y^*\|=y^*(y)=1$ such that
\begin{equation} \label{ineq-sharp}
 \max\{\|x-y\|, \|x^*-y^*\|\} \leq \sqrt{2\delta}.
 \end{equation}
\end{theorem}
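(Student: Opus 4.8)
\emph{Sketch of a proof.} The plan is to run the classical Bishop--Phelps convex-cone construction, tuning the aperture of the cone so as to squeeze out the sharp constant. First I would normalise the data: replacing $\delta$ by $1-x^*(x)\le\delta$ if necessary, assume $x^*(x)=1-\delta$; if in fact $x^*(x)=1$ then $\|x\|=\|x^*\|=1$ and the pair $(x,x^*)$ already does the job, so assume $x^*(x)<1$. The regime $\delta\in[1,2)$, in which $x^*(x)$ need not even be positive, I would dispose of separately: there $\sqrt{2\delta}$ is large enough that crude estimates based only on $\|x\|\le 1$, $\|x^*\|\le 1$ (together with the fact that norm-attaining pairs $(y,y^*)\in S_X\times S_{X^*}$ certainly exist) already give the bound. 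So from now on $\delta\in(0,1)$. Fix $k>1$, to be pinned down at the very end (of size $\sim\sqrt{2/\delta}$), and introduce the closed convex cone
\[
C=\bigl\{z\in X:\ \|z\|\le k\,x^*(z)\bigr\},
\]
noting that $x\in C$ as soon as $k\ge 1/(1-\delta)$.

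Next I would order $A=\{z\in B_X:\ z-x\in C\}$ by declaring $z_1\preceq z_2$ when $z_2-z_1\in C$. Along any chain $x^*$ is monotone and bounded, while $\|z_2-z_1\|\le k\,x^*(z_2-z_1)$; hence chains are norm-Cauchy and, $B_X$ and $C$ being closed, have upper bounds in $A$. Zorn's lemma gives a maximal element $y$. A short argument forces $\|y\|=1$: were $\|y\|<1$, one could move from $y$ a little along a nonzero direction of $C$ (the cone is nondegenerate, e.g.\ $x/\|x\|\in C$) while staying in $B_X$, contradicting maximality. Since $(y+C)\cap\operatorname{int}B_X=\varnothing$, the Hahn--Banach theorem yields $y^*\in S_{X^*}$ with $y^*(y)=1$ --- so $y^*$ attains its norm, at $y$ --- and with $y^*(z)\ge 0$ for all $z\in C$, i.e.\ $y^*$ lies in the dual cone $C^*$.

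The endgame is estimation. Writing $C=\bigcap_{\|w^*\|\le1}\{z:(w^*-kx^*)(z)\le 0\}$ and dualising half-spaces identifies $C^*=\{\mu x^*+h:\ \mu\ge 0,\ \|h\|\le\mu/k\}$. Thus $y^*=\mu x^*+h$ with these constraints, and the two identities $\|y^*\|=1$, $y^*(y)=1$ --- combined with $x^*(y)\in[1-\delta,1]$, which follows from $y-x\in C$ --- confine $\mu$ to an interval collapsing to $\{1\}$ as $k\to\infty$, so that $\|y^*-x^*\|$ is bounded by an expression of order $1/k$. On the other hand $y-x\in C$ gives immediately
\[
\|y-x\|\le k\,x^*(y-x)=k\bigl(x^*(y)-(1-\delta)\bigr)\le k\delta .
\]
One then selects $k=k(\delta)$ so that the two bounds meet at $\sqrt{2\delta}$.

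The main obstacle is exactly this last balancing. In their crude forms the two estimates are $\|y-x\|\le k\delta$ and $\|y^*-x^*\|\le c/k$ with some constant $c>1$, and optimising $k$ in these only delivers $\max\{\|x-y\|,\|x^*-y^*\|\}\le\sqrt{2\delta}\,(1+o(1))$. To land on the constant $\sqrt{2\delta}$ on the nose one must use the exact description of $C^*$, keep precise track of the admissible interval for $\mu$, avoid lossy triangle inequalities when bounding $\|y^*-x^*\|=\|(\mu-1)x^*+h\|$, and take the genuinely optimal $k$ rather than $\sqrt{2/\delta}$; checking that this optimum really produces $\sqrt{2\delta}$, and that it matches up with the easy treatment of $\delta$ close to $2$, is where the work sits. (An alternative is to feed the data into the Br{\o}ndsted--Rockafellar variational principle applied to $\tfrac12\|\cdot\|^2$ --- the hypothesis says precisely that $x^*\in\partial_\delta\bigl(\tfrac12\|\cdot\|^2\bigr)(x)$ --- which is slicker to set up but meets the same difficulty of removing the $o(1)$ incurred when the near-optimal point is rescaled onto $S_X$.)
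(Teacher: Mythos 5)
First, a point of reference: the paper does not prove Theorem~\ref{th:BPB} at all --- it is quoted from \cite{C-K-M-M-R}, where the sharp constant is obtained via a corollary of the Br{\o}ndsted--Rockafellar variational principle together with a delicate normalization of the approximating functional back onto $S_{X^*}$. So there is no internal proof to compare with, and your proposal has to be judged on its own. As it stands it is not a proof: you set up the classical Bishop--Phelps cone construction correctly (the Zorn's lemma step, the separation step, and the identification of the dual cone $C^*=\{\mu x^*+h:\mu\ge 0,\ \|h\|\le\mu/k\}$ are all fine, modulo the caveat that this identification needs $k\|x^*\|>1$), but you then concede that the two resulting estimates only balance to $\sqrt{2\delta}\,(1+o(1))$ and declare that removing the $o(1)$ ``is where the work sits.'' That is precisely the content of this statement: without the sharp constant one merely recovers Bollob\'{a}s's original theorem, which the paper could have cited instead. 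The missing step is genuinely nontrivial --- one has to control $\|y^*-x^*\|=\|(\mu-1)x^*+h\|$ without the lossy bound $|\mu-1|\,\|x^*\|+\|h\|$ in the regime $\mu>1$ (for instance via the identity $(\mu-1)x^*+h=\tfrac{\mu-1}{\mu}y^*+\tfrac{1}{\mu}h$), and one must track the dependence on $s:=\|x^*\|$, which may be strictly less than $1$; the primal bound then improves to $k\bigl(s-(1-\delta)\bigr)$ while the dual bound degrades by roughly $1-s$, and showing that these two effects exactly compensate to give $\sqrt{2\delta}$ is a computation you have not done. Pre-normalizing $x^*$ to $x^*/\|x^*\|$, which is the tempting shortcut, already costs up to $1-\|x^*\|\le\delta$ and destroys sharpness.

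There is also a concrete error in the reduction: the claim that $\delta\in[1,2)$ is disposed of ``by crude estimates based only on $\|x\|\le 1$, $\|x^*\|\le 1$'' is false. Take $X=\R$, $x=1$, $x^*=-1$: both norms are $\le 1$, yet every pair $(y,y^*)\in\{(1,1),(-1,-1)\}$ is at distance $2>\sqrt{2\delta}$ from either $x$ or $x^*$. This configuration is excluded only by the hypothesis $x^*(x)\ge 1-\delta$, which must therefore be used in this regime too; and it is used tightly --- for $x=\delta-1$, $x^*=-1$ (so $x^*(x)=1-\delta$) the only admissible pair is $(-1,-1)$, with $\|x-y\|=\delta$, and $\delta\le\sqrt{2\delta}$ holds with equality exactly at the endpoint $\delta=2$. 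Finally, two smaller loose ends: the cone construction degenerates when $x^*=0$ or $x=0$ (both possible once $\delta\ge 1$), and passing from the strict version ($x^*(x)>1-\delta$ gives distances $<\sqrt{2\delta}$) to the stated non-strict one requires an additional limiting argument, since the pairs produced for $\delta'\downarrow\delta$ need not converge.
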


In this project we are searching for possible extensions of the Bishop-Phelps theorem and the Bishop-Phelps-Bollob\'{a}s theorem for non-linear  Lipschitz functionals $f:X \longrightarrow \R$.

In the sequel we use the letter $E$ to denote a metric space, equipped with a distinguished point $0$ and  such that $E \setminus \{0\} \neq \emptyset$, and $\rho$ denotes the given distance of $E$. Recall that to such a \emph{pointed metric space} one  associates the Banach space  $\Lip(E)$ that consists of functions $f: E \longrightarrow \R$ with $f(0)=0$ which satisfy (globally) the  Lipschitz condition. This space is equipped with the norm
\beq \label{lipnorm}
\|f\|={\sup}\left\{\frac{\vert f(x)-f(y)\vert}{\rho(x,y)}: x,y \in E, x\neq y\right\}.
\eeq
In other words, $\|f\|$ is the smallest Lipschitz constant of $f$. We refer the reader to the book \cite{Weaver} for background on Lipschitz spaces.

The most interesting results of our paper are related to definition \ref{dir-att def} below, which does not make sense for functions on general pointed metric spaces, but only in the Banach space setting. That is why in this paper we mainly concentrate on the case of  $\Lip (X)$, where $X$ is a Banach space. Remark that in this case, evidently, $X^*$ is a closed subspace of $\Lip (X)$ with equality of norms.

As this paper deals with possible extensions of the Bishop-Phelps and Bishop-Phelps-Bollob\'{a}s theorems, we first have to say what we understand by a norm attaining Lipschitz functional. We have a couple of possible definitions for this.

First, the most natural definition of norm attainment for a functional $f \in \Lip (E)$ is the following.

\begin{definition}\label{str-att def}
A functional $f \in \Lip (E)$ \emph{attains its norm in the strong sense} if there are $ x,y \in E$, $x\neq y$ such that $\|f\|=\frac{\vert f(x)-f(y)\vert}{\rho(x,y)}$. The subset of all  functionals $f \in \Lip (E)$ that attain their norm in the strong sense is denoted $\SA(E)$.
\end{definition}

Unfortunately, in the sense of the Bishop-Phelps theorem, this definition is too restrictive. Even in the one-dimensional case ($X = \R$), the subset $\SA(X)$ is not dense in $\Lip (X)$ (see Example~\ref{str-att thm} and Theorem \ref{m_conv_theor}). Nevertheless, for every Banach space $X$, $\SA(X)$ is weakly sequentially dense in $\Lip (X)$ (see Theorem~\ref{c_0 theor}). This is the content of our section~\ref{sec:SA}, where the results are actually proved for metrically convex metric spaces.

It is then clear that a less restrictive way for a Lipschitz functional to attain its norm is needed to get density. We will use the following definition.

\begin{definition}\label{dir-att def}
A functional $g \in \Lip (X)$ \emph{attains its norm at the direction} $u\in S_X$ if there is a sequence of pairs $\{(x_n,y_n)\}$ in $X\times X$, with $x_n\neq y_n$, such that
\begin{equation*} 
\lim_{n \to \infty}\frac{x_n-y_n}{\|x_n-y_n\|} = u \quad \text{and} \quad  \lim_{n \to \infty}\frac{ g(x_n)-g(y_n)}{\|x_n-y_n\|} = \|g\|.
\end{equation*}
In this case, we say that $g$ \emph{attains its norm directionally}. The set of all those $f \in \Lip (X)$ that attain their norm directionally is denoted by $\DA(X)$.
\end{definition}

We start our consideration with two reasons of why the directional approach is natural in our framework.

\begin{enumerate}
\item[(a)] If $X$ is finite-dimensional, then $\DA(X) = \Lip (X)$ by a compactness argument, so at least in this easiest case the directional Bishop-Phelps theorem does not fail.
\item[(b)] A linear functional attains its norm at direction $u$ if and only if $ f(u) = \|f\|$, so it attains its norm in the usual sense.
\end{enumerate}

We devote section~\ref{sec:seminorms} to study norm attaining seminorms. Continuous seminorms on a Banach space $X$ form a closed cone in $\Lip(X)$, and the Lipschitz norm coincides with the uniform norm (i.e.\ the supremum on the unit sphere of the space). Moreover, the respective sets of continuous seminorms, that attain the norm strongly, that attain the norm directionally, and that attain the norm uniformly, coincide (Lemma~\ref{lemma:NAseminorm}). We provide a general Bishop-Phelps-Bollob\'{a}s theorem for seminorms, but getting uniform density instead of density in the Lipschitz norm (Proposition \ref{prop:seminDensistyUnifNorm}). Besides, we prove the Lipschitz-norm density of norm attaining seminorms for Banach spaces with the Radon-Nikod\'{y}m property (Proposition \ref{prop:semiRNP}), by using the Stegall's version of  the classical Bourgain-Steagall non-linear optimization principle. Finally, we show that in every infinite-dimensional Banach space, there is a continuous seminorm which does not attain its norm (Example~\ref{example:NAseminorm}), so item (a) above actually characterizes finite dimension.

The main result of the paper is a Bishop-Phelps-Bollob\'{a}s-type theorem for Lipschitz functionals on uniformly convex spaces. Even though we only have one kind of examples, it makes sense to introduce the following definition.

\begin{definition} \label{dir-LipBPB def}
A Banach space $X$ has the \emph{directional Bishop-Phelps-Bollob\'{a}s property for Lipschitz functionals} ($X \in  \LipBPB$ for short), if for every $\varepsilon>0$ there is such a $\delta>0$, that for every $f\in \Lip (X)$ with  $\|f\| = 1$ and for every $ x,y \in X$ with $x\neq y$ satisfying $ \frac{ f(x)-f(y)}{\|x-y\|}>1-\delta$, there is $g \in \Lip (X)$ with $\|g\|=1$ and there is $u \in S_X$ such that $g$ attains its norm at the direction $u$,
$\|g-f\|<\varepsilon$, and  $\left\|\frac{x-y}{\|x-y\|}-u\right\|<\varepsilon$.
\end{definition}

So, with this notation, the main result of the paper is to prove that uniformly convex spaces have the $\LipBPB$ (Theorem~\ref{main theorem}), and even a stronger property called \emph{local directional Bishop-Phelps-Bollob\'{a}s property for Lipschitz functionals}  introduced in Definition~\ref{loc-dir-LipBPB def}. This is the content of section~\ref{sec:main}. In the way to prove such result, we need to provide a weak version of the property (Lemma~\ref{lemLipBPB-prel}), valid for all Banach spaces, which is proved using the Lipschitz-free space (see definition in section~\ref{sec:freeLipspace}). We also prove (Lemma~\ref{lemma-relaxed}) that the requirements for a general Banach space to have the (local) Bishop-Phelps-Bollob\'{a}s property for Lipschitz functionals can be relaxed in the sense that only ``approximate'' directional norm attainment of $g$ is required. These two preliminary results are the content of section~\ref{sec:freeLipspace}.

We should make it clear that we are not able to answer some natural easy-looking questions related to our results. For example, we are not able to construct a Banach space which has no (local) directional Bishop-Phelps-Bollob\'{a}s property for Lipschitz functionals.
On the other hand, we refer to \cite{GodeLips} for some negative results on norm attaining Lipschitz maps between Banach spaces which do not overlap with the results of this manuscript.

We finish this introduction recalling an important tool to construct Lipschitz functionals: the classical \emph{McShane's extension theorem}. It says that if $M$ is a subspace of a metric space $E$ and $f:  M\longrightarrow \R$ is a Lipschitz functional, then there is an extension to a Lipschitz functional
$F:  E \longrightarrow \R$ with the same Lipschitz constant; see
\cite[Theorem~1.5.6]{Weaver} or \cite[p.~12/13]{BL1}.


\section{Strongly attaining Lipschitz functionals}\label{sec:SA}

As announced in the introduction, there is no Bishop-Phelps type theorem for Lipschitz functionals in the strong sense of the attainment, even in the one-dimensional case.

\begin{example} \label{str-att thm}
{\slshape $\SA([0, 1])$ is not dense in $\Lip([0, 1])$.}
\end{example}

In order to demonstrate this, we need the following easy lemma.

\begin{lemma} \label{str-att lemma}
If $f \in \Lip (E)$ attains its norm on a pair $(x,y) \in E\times E$, $x\neq y$, and $z \in E\setminus\{x,y\}$ is such an element that $\rho(x,y) = \rho(x,z) + \rho(z,y)$, then $f$ strongly attains its norm on the pairs $(x,z)$ and $(y,z)$, and
\beq\label{str-att lemma-eq}
f(z) = \frac{\rho(z,y)f(x) +\rho(x,z)f(y)}{\rho(x,y)}.
\eeq
In particular, if $E$ is a convex subset of a Banach space, then $f$ is affine on the closed segment $\conv\{x, y\}$, i.e.  $f(\theta x + (1 - \theta)y) = \theta f(x) +  (1 - \theta) f(y)$ for every $\theta \in [0, 1]$.
\end{lemma}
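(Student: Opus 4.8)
The plan is a short squeezing argument combining the triangle inequality for $\rho$ with the Lipschitz estimate for $f$. Write $L=\|f\|$; if $L=0$ then $f\equiv 0$ (recall $f(0)=0$) and every assertion is trivial, so we may assume $L>0$. Since $f$ attains its norm on $(x,y)$, replacing $f$ by $-f$ if necessary (which changes neither the norm nor the vanishing at $0$), we may assume $f(x)-f(y)=L\,\rho(x,y)$. First I would apply the Lipschitz condition to the pairs $(x,z)$ and $(z,y)$:
\[
f(x)-f(z)\le L\,\rho(x,z),\qquad f(z)-f(y)\le L\,\rho(z,y).
\]
Adding these two inequalities and using the hypothesis $\rho(x,y)=\rho(x,z)+\rho(z,y)$ yields $f(x)-f(y)\le L\,\rho(x,y)$, which is actually an equality by our normalization; hence both displayed inequalities must in fact be equalities as well.

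From $f(x)-f(z)=L\,\rho(x,z)$ and $f(z)-f(y)=L\,\rho(z,y)$, and since $z\notin\{x,y\}$ forces $\rho(x,z)>0$ and $\rho(z,y)>0$, I get $\frac{|f(x)-f(z)|}{\rho(x,z)}=\frac{|f(z)-f(y)|}{\rho(z,y)}=L=\|f\|$, that is, $f$ attains its norm in the strong sense on $(x,z)$ and on $(z,y)$. To obtain \eqref{str-att lemma-eq}, I substitute $L=\frac{f(x)-f(y)}{\rho(x,y)}$ into $f(z)=f(x)-L\,\rho(x,z)$ and use $\rho(x,y)-\rho(x,z)=\rho(z,y)$; a one-line computation then gives $f(z)=\dfrac{\rho(z,y)f(x)+\rho(x,z)f(y)}{\rho(x,y)}$.

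For the ``in particular'' statement, let $E$ be a convex subset of a Banach space, fix $x\neq y$ in $E$, and fix $\theta\in(0,1)$ (the cases $\theta\in\{0,1\}$ being trivial). Set $z=\theta x+(1-\theta)y$, which lies in $E$ by convexity. Then $\rho(x,z)=\|x-z\|=(1-\theta)\|x-y\|$ and $\rho(z,y)=\|z-y\|=\theta\|x-y\|$, so $\rho(x,z)+\rho(z,y)=\|x-y\|=\rho(x,y)$; thus the hypothesis of the lemma is satisfied and plugging these values into \eqref{str-att lemma-eq} gives $f(z)=\theta f(x)+(1-\theta)f(y)$. I do not expect any serious obstacle in this proof; the only points needing a moment's care are the reduction to $f(x)-f(y)\ge 0$ via a sign change, the exclusion of the trivial $\|f\|=0$ case, and the observation that the assumption $z\notin\{x,y\}$ is precisely what makes the quotients defining strong norm attainment on $(x,z)$ and $(z,y)$ meaningful.
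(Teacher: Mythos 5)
Your proof is correct and follows essentially the same route as the paper's: normalize the sign so that $f(x)-f(y)=\|f\|\rho(x,y)$, squeeze the two Lipschitz inequalities for $(x,z)$ and $(z,y)$ against the hypothesis $\rho(x,y)=\rho(x,z)+\rho(z,y)$ to force equalities, and substitute to get \eqref{str-att lemma-eq}. The extra details you supply (the trivial case $\|f\|=0$ and the explicit computation for the ``in particular'' part) are fine but not substantively different.
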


\begin{proof}
We may (and do) assume without loss of generality that $f(x) - f(y) \ge 0$ (otherwise we multiply $f$ by $-1$). Since $f$ attains its norm on the pair $(x,y)$, we have
\bea
\|f\| \rho(x,y) &=& f(x) - f(y) = f(x)  - f(z) + f(z) -  f(y)  \\
&\le&  \|f\| \rho(x,z)  + \|f\| \rho(z,y)  = \|f\| \rho(x,y).
\eea
This means that the inequalities
$$
f(x)  - f(z) \le \|f\|  \rho(x,z)\ \text{ and }\
 f(z) -  f(y) \le \|f\| \rho(z,y)
$$
which we used above are, in fact, equalities. So, $f(x) = f(z) + \|f\|  \rho(x,z)$ and $f(y) = f(z) - \|f\| \rho(z,y)$. Substituting the last two formulas into the right-hand side of \eqref{str-att lemma-eq}, we get the desired result.
\end{proof}

\begin{proof}[Proof of Example~\ref{str-att thm}]
It is well known (for example, \cite[Example 1.6.5]{Weaver} or \cite[Propositions 6 and 2]{Kadets}) that $\Lip([0, 1])$ is isometric to $L_\infty([0, 1])$, and the corresponding bijective isometry $U: \Lip([0, 1]) \longrightarrow L_\infty([0, 1])$ is just the differentiation operator (the derivative of a Lipschitz function $f: [0, 1] \longrightarrow \R$ exists almost everywhere). Under this isometry, every $f \in \SA ([0, 1])$ maps to a function, which is equal either to $ \|f\|$ or to $- \|f\|$ on some non-void interval (here we use Lemma \ref{str-att lemma}). Denote $A$ a nowhere dense closed subset of $[0,1]$ of positive Lebesgue measure. and let  $g \in \Lip([0, 1])$ be the function, whose derivative equals $\1_A$ (the characteristic function of $A$) a.e. Then $g$  cannot be approximated by functions from $\SA([0, 1])$.
Actually, we claim that
\beq \label{norm-dif-eq}
\|g - f\| = \| \1_A - f'\|_\infty \ge \frac12
\eeq
for every $f \in \SA (\R)$.

 In fact, $\|g\| = \|\1_A\|_\infty = 1$ so, if $\|f'\|_\infty \le \frac12$, then \eqref{norm-dif-eq} follows from the triangle inequality. If $\|f'\|_\infty > \frac12$, then, as we remarked before, $|f'(t)| > \frac12$ on some open interval $(a, b)$. But, since  $A$ is nowhere dense, there is a smaller interval $(c, d) \subset (a, b)$ such that $\1_A(t) = 0$ for $t \in (c, d)$. So $| (\1_A - U(f))(t)| > \frac12$  on $(c, d)$, which implies \eqref{norm-dif-eq}.  Hence, $\SA ([0, 1])$ is not dense in $\Lip([0, 1])$.
\end{proof}

Recall, that a metric space $E$ is said to be \emph{metrically convex} if, for every pair of distinct points $x, y \in E$, there is a curve $\ell \subset E$ which connects $x$ and $y$ and is isometric to the segment $[0, \rho(x,y)] \subset \R$. The next theorem demonstrates that  Example~\ref{str-att thm} extends to all non-trivial metrically convex pointed metric spaces, and in particular to all Banach spaces.

\begin{theorem} \label{m_conv_theor}
Let $E$ be a metrically convex pointed metric space. Then  $\SA (E)$ is not dense in $\Lip(E)$.
\end{theorem}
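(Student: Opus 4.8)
The plan is to mimic the one-dimensional construction of Example~\ref{str-att thm}, transporting it into $E$ by means of an isometric copy of an interval. First I would fix any point $p \in E \setminus \{0\}$ and set $r = \rho(0,p) > 0$; by metric convexity there is a curve $\ell \subset E$ isometric to $[0,r]$ joining $0$ and $p$. Let $\gamma: [0,r] \longrightarrow \ell \subset E$ be this isometry, with $\gamma(0) = 0$. Inside $[0,r]$ I would pick a closed nowhere dense set $A$ of positive Lebesgue measure (a fat Cantor set), and let $h: [0,r] \longrightarrow \R$ be the Lipschitz function with $h(0)=0$ and $h' = \1_A$ a.e., so that $\|h\|_{\Lip} = 1$. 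Composing, $h \circ \gamma^{-1}$ is a $1$-Lipschitz function on $\ell$ vanishing at $0$; by McShane's extension theorem it extends to some $g \in \Lip(E)$ with $\|g\| = 1$.

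The claim is then that $\dist(g, \SA(E)) \geq \tfrac12$, which finishes the proof since $\SA(E)$ is not dense. Take any $f \in \SA(E)$; I would show $\|g - f\| \geq \tfrac12$. If $\|f\| \leq \tfrac12$, the triangle inequality gives $\|g - f\| \geq \|g\| - \|f\| \geq \tfrac12$. So assume $\|f\| > \tfrac12$. Since $f$ attains its norm strongly, fix a pair $(x,y)$, $x \neq y$, with $\frac{|f(x)-f(y)|}{\rho(x,y)} = \|f\|$; by metric convexity there is a curve $\ell'$ isometric to $[0,\rho(x,y)]$ joining $x$ and $y$, and by Lemma~\ref{str-att lemma} the restriction of $f$ to $\ell'$ is "affine" along that isometric parameter, i.e.\ writing $\ell'$ as $\{\gamma'(t): t \in [0,\rho(x,y)]\}$ we get $f(\gamma'(t)) = f(\gamma'(0)) \pm \|f\| t$ for all $t$, hence $f \circ \gamma'$ has derivative of absolute value $\|f\| > \tfrac12$ everywhere on $(0, \rho(x,y))$.

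Now I want to compare $g$ and $f$ on a small sub-curve of $\ell'$. The function $g \circ \gamma'$ is $1$-Lipschitz on $[0,\rho(x,y)]$, so it is differentiable a.e.; I would like to find a point where its derivative is small, namely $\leq 0$ in absolute value is too much to hope for directly, so instead the right move is: since $g \circ \gamma'$ is $1$-Lipschitz and we only know $g$ is built from the $A$-construction along a \emph{different} curve $\ell$, there is no a priori control of $(g\circ\gamma')'$. This is the main obstacle: in dimension one there was a single line and the nowhere-denseness of $A$ forced small derivative of $g$ near any interval, but on a general metrically convex space the norm-attaining pair $(x,y)$ of $f$ may live on a curve unrelated to $\ell$, so I cannot directly read off that $g' = 0$ somewhere along $\ell'$.

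To overcome this, I would change the target of $g$: instead of extending from a single curve, make $g$ "universally flat" in a robust way. Concretely, I would choose $g$ so that $g$ is itself a $1$-Lipschitz function that is locally constant on a dense open set — e.g.\ take $g$ to be (an extension of) a function whose small-scale oscillation can be controlled everywhere. The cleanest route is: let $g$ be the McShane extension of the function $t \mapsto h(t)$ along $\ell$ but modified to the \emph{distance-to-}$A$-type function $g_0(q) = \dist(\gamma^{-1}(\Pi(q)), A)$ composed appropriately — however the genuinely working argument, which I expect the authors use, is the following dichotomy applied directly on $\ell'$: on the sub-curve $\ell'$ the function $f$ has slope of modulus $\|f\|$ everywhere, while $g$ restricted to $\ell'$, being $1$-Lipschitz, satisfies $\frac{|g(\gamma'(s)) - g(\gamma'(t))|}{|s-t|} \leq 1$; the gap to estimate is $\sup_{s\neq t} \frac{|(g-f)(\gamma'(s)) - (g-f)(\gamma'(t))|}{|s-t|}$. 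Writing $F = (g - f)\circ\gamma'$, if at some point $F' $ has absolute value $\geq \tfrac12$ we are done; otherwise $|F'| < \tfrac12$ a.e., but then $|(g\circ\gamma')'| = |F' + (f\circ\gamma')'| \geq \|f\| - \tfrac12 > 0$ a.e.\ on $(0,\rho(x,y))$ with a definite sign (since $(f\circ\gamma')'$ is constant), so $g\circ\gamma'$ is strictly monotone with $|(g\circ\gamma')'| \geq \|f\| - \tfrac12$, forcing $g$ itself to attain its norm strongly on $\ell'$ with $\|g\| \geq \|f\| - \tfrac12$; but by construction $g$ does \emph{not} attain its norm strongly anywhere (the set $A$ being nowhere dense means $\1_A$ equals its essential sup $1$ on no interval, and $\|g\|=1$ is attained along $\ell$ only in this essential sense, while along any curve the slope $1$ is never realized on a pair) — contradiction. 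So the key remaining point, and the real work, is to argue that the specific $g$ we built lies in $\Lip(E) \setminus \SA(E)$ with $\|g\| = 1$ and moreover that \emph{no} $1$-Lipschitz extension behavior along \emph{any} isometric curve attains slope close to $1$; this robustness is exactly what the fat Cantor set buys us, and verifying it carefully — that $g$ has no pair realizing slope $> \tfrac12$ outside a controlled set — is the step I expect to be the main obstacle and where Lemma~\ref{str-att lemma} must be invoked with care.
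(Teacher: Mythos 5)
You have correctly identified both the right one-dimensional ingredient (the fat-Cantor-set function) and the precise point where your argument breaks: after extending $h\circ\gamma^{-1}$ from $\ell$ to $E$ by McShane, you have no control over the behaviour of the extension $g$ along an isometric curve $\ell'$ joining a norm-attaining pair of some $f\in\SA(E)$, since $\ell'$ may be completely unrelated to $\ell$. The ``dichotomy'' you offer does not close this gap: if $|F'|<\tfrac12$ a.e.\ you only conclude that $g\circ\gamma'$ is monotone with slope at least $\|f\|-\tfrac12$, and since $\|g\|=1$ while $\|f\|-\tfrac12$ may be arbitrarily small, this in no way forces $g$ to attain its norm strongly; so there is no contradiction with $g\notin\SA(E)$. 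Worse, it is not even clear that your $g$ lies outside $\SA(E)$: far from $\ell$ a McShane extension typically behaves like $c\pm\rho(\cdot\,,t_0)$ and can realize slope exactly $1$ on a pair of points. So the proof as written is incomplete at exactly the step you flag as the main obstacle.

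The missing idea, which is how the paper proceeds, is to extend the \emph{parametrization} rather than the \emph{values}: apply McShane to the real-valued map $\gamma^{-1}:\ell\to[0,r]$ to obtain a norm-one $u\in\Lip(E)$ with $u|_\ell=\gamma^{-1}$, and set $g:=h\circ u$. Then $g$ factors through the interval, which is precisely the robustness you were trying to manufacture. Indeed, if $f\in\SA(E)$ attains its norm on $(x,y)$ and $\|g-f\|<\tfrac12$ (so $\|f\|>\tfrac12$), Lemma~\ref{str-att lemma} gives $|f(z_1)-f(z_2)|=\|f\|\,\rho(z_1,z_2)$ for all $z_1,z_2$ on the curve $\ell'$ joining $x$ to $y$, hence
$|g(z_1)-g(z_2)|\ \ge\ |f(z_1)-f(z_2)|-\|g-f\|\,\rho(z_1,z_2)\ >\ 0$
for all distinct $z_1,z_2\in\ell'$. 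Thus $u$ is injective on $\ell'$, so $u(\ell')$ is a nondegenerate subinterval of $[0,r]$, and $h$ would have to be injective on it --- impossible, because $A$ is nowhere dense, so $h$ (whose derivative is $\1_A$ a.e.) is constant on some further subinterval. Note that the contradiction is with injectivity of $h$ on an interval, not with norm attainment of $g$.
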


\begin{proof}
Fix $x_0 \in E \setminus \{0\}$. Without loss of generality we may assume $\rho(0, x_0) = 1$.  Denote $\ell \subset E$ the isometric copy of $[0, 1]$ which connects 0 and $x_0$. Again, without loss of generality we may assume that $\ell = [0, 1] \subset E$. Let $u: E \longrightarrow [0, 1]$ be a norm-1 Lipschitz function whose restriction to $[0, 1]$ is the identity map (here we apply McShane's extension theorem) and let $g \in \Lip([0, 1])$ be the function from Example~\ref{str-att thm}. We are going to demonstrate that $h := g \circ u \in \Lip(E) \setminus \overline{\SA(E)}$.

Consider arbitrary $f \in \SA (E)$. Like in  Example~\ref{str-att thm}, we will show that\linebreak $\|h - f\|  \ge \frac12$. Assume contrary
\beq \label{norm-dif-eq+}
\|h - f\|  < \frac12.
\eeq
Then $\|f\| > 1/2$. Denote  $(x,y) \in E\times E$, $x\neq y$ a pair at which  $f$ attains its norm. Due to  metric convexity of $E$, there is an isometric copy $\gamma$ of $[0, \rho(x,y)]$ which connects $x$ and $y$. According to Lemma \ref{str-att lemma},
$$
|f(z_1) - f(z_2)| = \|f\| \rho(z_1, z_2)
$$
for every $z_1, z_2 \in \gamma$, $z_1 \neq z_2$. Consequently,
$$
|h(z_1) - h(z_2)| \ge |f(z_1) - f(z_2)|  - \|h - f\|\rho(z_1, z_2) > 0
$$
 for all such $z_1, z_2$. This, in turn, implies that $g(v_1) \neq g(v_2) $ for every pair of distinct points $v_1, v_2$ in the non-void interval $u(\gamma) \subset [0,1]$. But, according to the definition of $g$, this is impossible.
\end{proof}

Even though in the sense of Definition \ref{str-att def} the Bishop-Phelps theorem does not transfer to Lipschitz functionals, $\SA(X)$ cannot be too small. Namely, we are going to demonstrate that $\SA(X)$ is weakly sequentially dense in $\Lip(X)$ for every Banach space $X$.  Even more, we will prove an analogous fact for a wider class of  ``local'' metric spaces.

According to \cite[Definition 2.2]{IvKadWer}, a pointed metric space $E$ is said to be \emph{local}  if for every $\eps > 0$ and for
every function $f \in \Lip(E)$ there are two distinct points
 $t_1, t_2\in E$ such that $\rho(t_1, t_2) < \eps$ and
\begin{equation} \label{eq6}
\frac{f(t_2) - f(t_1)}{\rho(t_1, t_2)} > \|f\| - \eps.
\end{equation}
Every local space evidently is infinite, and it is also easy to see that locality implies the absence of  isolated points (the function $f = \1_{\{\tau\}}$ where $\tau$ is an isolated point does not fit to the definition). Every metrically convex $E$ is local
\cite[Proposition 2.3]{IvKadWer} and a partial converse statement is known \cite[Proposition 2.9]{IvKadWer}: let $E$ be a metric subspace of  a smooth locally uniformly rotund Banach space. If $E$ is compact and local, then $E$ is convex.

At first, a helpful lemma.

\begin{lemma} \label{c_0 lemma}
Let $E$ be a local metric space, $\{f_n\}$ a sequence in $S_{\Lip (E)}$, and for each $n\in \N$ let  $U_n:=\{x \in E\,:\, f_n(x) \neq 0\}$ be the corresponding supports. Suppose that the sets $U_n$ are pairwise separated, i.e.
$$
d_{n,m} = \inf\{\rho(x,y): x \in U_n, y \in U_m\} > 0
$$
for every $n \neq m$. Then, the sequence $\{f_n\}$ is isometrically equivalent to the canonical basis of $c_0$, i.e.\ for any finite collection $\{a_j\}_{j=1}^n$ of reals
\beq\label{c_0 lemma-eq}
\left\|\sum_{j=1}^n a_j f_j \right\| = \max_{k}|a_k|.
\eeq
\end{lemma}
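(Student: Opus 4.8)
The plan is to prove the two inequalities $\big\|\sum_{j=1}^{n} a_j f_j\big\| \ge \max_{k}|a_k|$ and $\big\|\sum_{j=1}^{n} a_j f_j\big\| \le \max_{k}|a_k|$ separately, writing $f := \sum_{j=1}^{n} a_j f_j$ and $M := \max_{1\le k\le n}|a_k|$. The case $n=1$ is immediate since $\|a_1 f_1\| = |a_1|$, so assume $n\ge 2$ and set $\eps_0 := \min_{1\le j<l\le n} d_{j,l} > 0$. The key preliminary observation is a \emph{localization} principle: each $f_j$ is $1$-Lipschitz and vanishes off $U_j$, and the hypothesis $d_{j,l}>0$ not only makes the $U_j$ pairwise disjoint but, more strongly, forces any two points lying in distinct supports to be at distance $\ge\eps_0$. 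Hence, if $\rho(t_1,t_2)<\eps_0$, the pair $\{t_1,t_2\}$ meets at most one of $U_1,\dots,U_n$; consequently only one term of the sum defining $f$ is active on $\{t_1,t_2\}$, i.e.\ there is an index $k$ (arbitrary if the pair meets no $U_j$) with $f(t_i)=a_kf_k(t_i)$ for $i=1,2$, and in particular $|f(t_1)-f(t_2)| = |a_k|\,|f_k(t_1)-f_k(t_2)| \le M\,\rho(t_1,t_2)$.

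For the upper bound I would apply the locality of $E$ to the function $f$ itself: given $\eps\in(0,\eps_0)$, locality produces distinct $t_1,t_2\in E$ with $\rho(t_1,t_2)<\eps<\eps_0$ and $\frac{f(t_2)-f(t_1)}{\rho(t_1,t_2)}>\|f\|-\eps$; the localization observation bounds the left-hand side by $|a_k|\le M$, so $\|f\|<M+\eps$, and letting $\eps\to0$ gives $\|f\|\le M$. For the lower bound, fix $k$ with $|a_k|=M$ and apply locality to $f_k$: for $\eps\in(0,\min\{1,\eps_0\})$ we get distinct $t_1,t_2$ with $\rho(t_1,t_2)<\eps$ and $\frac{f_k(t_2)-f_k(t_1)}{\rho(t_1,t_2)}>\|f_k\|-\eps=1-\eps>0$. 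Positivity forces $f_k(t_1)\neq f_k(t_2)$, hence $\{t_1,t_2\}$ meets $U_k$; since moreover $\rho(t_1,t_2)<\eps_0$, by localization $k$ is the only index met, so $f(t_i)=a_kf_k(t_i)$ and therefore $\frac{|f(t_2)-f(t_1)|}{\rho(t_1,t_2)}=|a_k|\,\frac{f_k(t_2)-f_k(t_1)}{\rho(t_1,t_2)}>M(1-\eps)$; letting $\eps\to0$ gives $\|f\|\ge M$. Combining the two bounds yields \eqref{c_0 lemma-eq}, and since the $f_j$ have disjoint supports the resulting isometry onto a copy of $c_0$ indeed sends each $f_j$ to the $j$-th canonical vector.

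I expect the one genuinely delicate point to be the upper bound. The naive estimate, for $x\in U_k$ and $y\in U_m$ with $k\neq m$, namely $|f(x)-f(y)|\le|a_k|\,|f_k(x)|+|a_m|\,|f_m(y)|$, only gives $2M\,\rho(x,y)$ (each of $|f_k(x)|,|f_m(y)|$ being at most $\rho(x,y)$), which is off by a factor of $2$ and cannot be fixed using only the separation of supports — in a general metric space that constant $2$ is genuinely attained. It is precisely the locality hypothesis that rescues the argument, by allowing one to test the Lipschitz norm of $f$ only on pairs of nearby points, which are automatically confined to a single support $U_k$, where $f$ agrees with $a_kf_k$. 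The lower bound is routine, the only care needed being to pick $\eps<\eps_0$ so that the witnessing pair for $f_k$ cannot escape $U_k$.
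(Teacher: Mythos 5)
Your proof is correct and follows essentially the same route as the paper's: apply locality to $f=\sum_j a_jf_j$ for the upper bound and to $f_k$ with $|a_k|$ maximal for the lower bound, in both cases using the separation of the supports to confine the witnessing pair of nearby points to a single $U_k$, where $f$ agrees with $a_kf_k$. Your closing remark about the factor of $2$ correctly identifies why the locality hypothesis is indispensable, but the argument itself matches the paper's.
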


\begin{proof}
Denote $f = \sum_{j=1}^n a_j f_j$, $d_n = \min\{d_{k,j}: k,j \in \{1,2,\ldots,n\}\}$. Fix an $\eps > 0$ satisfying $\eps < d_n$.  According to the definition of locality,   there are points
 $t_1, t_2\in E$ such that $0 < \rho(t_1, t_2) < \eps$ which fulfill \eqref{eq6}. If one of $t_i$ belongs to some $U_m$, then the other one either belongs to the same $U_m$, or lies outside of $f$'s support. Consequently, for this $m$
 $$
 \|f\| < \frac{f(t_2) - f(t_1)}{\rho(t_1, t_2)} + \eps =   a_m\frac{f_m(t_2) - f_m(t_1)}{\rho(t_1, t_2)} + \eps \le |a_m| + \eps.
 $$
By the arbitrariness of $\eps$,  this implies $ \|f\|  \le \max_{k}|a_k|$.

In order to get the reverse inequality, we fix such an $m$  that $\max_{k}|a_k|  = |a_m|$ and apply the locality condition to $f_m$. We get  points
$t_1, t_2\in E$ such that $0 < \rho(t_1, t_2) < \eps$ and
$$
\frac{f_m(t_2) - f_m(t_1)}{\rho(t_1, t_2)} > 1 - \eps.
$$
If $\eps$ is small enough, this condition again means that  one of $t_i$ belongs to  $U_m$ and the other one either belongs to $U_m$, or lies outside of $f$'s support.
Consequently,
\begin{align*}
\max_{k}|a_k| & = |a_m| \le \frac{1}{1-\eps}  |a_m|  \frac{|f_m(t_2) - f_m(t_1)|}{\rho(t_1, t_2)} \\ & = \frac{1}{1-\eps} \frac{|f(t_2) - f(t_1)|}{\rho(t_1, t_2)} \le  \frac{1}{1-\eps} \|f\|.\qedhere
\end{align*}
\end{proof}

As an obvious consequence, we obtain the following.

\begin{corollary} \label{c_0 coroll}
If $E$ is a local metric space, then every bounded separately supported sequence in $\Lip (E)$ converges weakly to zero.
\end{corollary}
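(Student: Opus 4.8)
The plan is to deduce Corollary \ref{c_0 coroll} from Lemma \ref{c_0 lemma} in essentially one step, since the lemma already does all the heavy lifting: it identifies any such sequence with the canonical $c_0$-basis. First I would reduce to the normalized case. Given a bounded separately supported sequence $\{g_n\}$ in $\Lip(E)$, discard any $g_n$ that is identically zero (finitely or not, it does not matter), and for the remaining ones set $f_n = g_n/\|g_n\|$; the supports $U_n = \{x : f_n(x)\neq 0\}$ coincide with those of $g_n$, so they are still pairwise separated in the sense required by Lemma \ref{c_0 lemma}. Strictly speaking one also needs the $U_n$ to have $d_{n,m}>0$; I would take ``separately supported'' to mean exactly this, matching the hypothesis of the lemma, and note that the boundedness $\sup_n\|g_n\| =: M < \infty$ will be used at the end.

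Next I would invoke Lemma \ref{c_0 lemma} to conclude that $\{f_n\}$ is isometrically equivalent to the canonical basis of $c_0$, hence in particular $f_n \to 0$ weakly: this is the classical fact that the unit vector basis of $c_0$ is weakly null, which follows because for any $\Phi \in \Lip(E)^*$ the restriction of $\Phi$ to the closed linear span of $\{f_n\}$ — a copy of $c_0$ — lies in $\ell_1 = c_0^*$, so $\Phi(f_n)$ are the coordinates of an $\ell_1$ sequence and therefore tend to $0$. (Alternatively one can argue directly: if $\Phi(f_{n_k}) \not\to 0$ along some subsequence, pick signs $a_j = \pm 1$ and finite blocks so that $\sum a_j \Phi(f_{n_j})$ grows linearly in the number of terms while $\|\sum a_j f_{n_j}\| = 1$ by \eqref{c_0 lemma-eq}, contradicting boundedness of $\Phi$.) Finally, since $g_n = \|g_n\| f_n$ with $\|g_n\| \le M$, for every $\Phi \in \Lip(E)^*$ we get $|\Phi(g_n)| \le M\,|\Phi(f_n)| \to 0$, so $g_n \to 0$ weakly, as claimed.

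I do not expect any real obstacle here; the corollary is labelled ``an obvious consequence'' precisely because Lemma \ref{c_0 lemma} has already done the work. The only point requiring a word of care is the passage from the isometric $c_0$-identification to weak nullity, i.e. recalling that the canonical basis of $c_0$ is weakly null — but this is a standard fact (the dual of $c_0$ is $\ell_1$, whose elements are absolutely summable, forcing the coordinate functionals to vanish in the limit), and it is the only genuinely non-formal ingredient in the argument. Everything else — normalization, preservation of the separation condition, and the final rescaling by the uniformly bounded constants $\|g_n\|$ — is routine.
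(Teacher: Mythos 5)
Your proof is correct and follows exactly the route the paper intends: the paper offers no written proof, calling the corollary an ``obvious consequence'' of Lemma~\ref{c_0 lemma}, and your normalization, application of the isometric $c_0$-basis identification, the standard fact that the canonical basis of $c_0$ is weakly null (or your direct sign-choice argument), and the final rescaling by the bounded constants $\|g_n\|$ fill in precisely the intended steps.
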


We are now ready to prove the weak sequential density of strongly attaining Lipschitz functionals.

\begin{theorem} \label{c_0 theor}
If $E$ is a local metric space, then $\SA(E)$ is weakly sequentially dense in $\Lip(E)$, i.e.\ for every $g \in \Lip (E)$ there is a sequence $\{g_n\}$ in $\SA(E)$ which converges weakly to $g$.
\end{theorem}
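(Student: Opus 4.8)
The plan is to approximate an arbitrary $g \in \Lip(E)$ by a sequence $\{g_n\}$ that is built from $g$ plus a small ``bump'' whose presence forces strong norm attainment, while the bumps are arranged to be separately supported so that Corollary~\ref{c_0 coroll} guarantees weak convergence to $g$. First I would observe that since $E$ is local, we may pick a reference behaviour of $g$ near the Lipschitz constant: for a fixed small $\eps>0$ there are distinct points $t_1,t_2 \in E$ with $\rho(t_1,t_2)$ as small as we like and $\frac{g(t_2)-g(t_1)}{\rho(t_1,t_2)} > \|g\|-\eps$. Metric convexity is \emph{not} available in general (only locality), so instead of running the bump along a geodesic I would use McShane's extension theorem to manufacture, on a tiny ball around $t_1$, an auxiliary norm-$1$ Lipschitz function $\psi_n$ supported in that ball whose increment between two chosen points is exactly its Lipschitz constant; adding $c\,\psi_n$ to $g$ with a suitably chosen sign and a small coefficient $c$ produces a function $g_n$ whose Lipschitz norm is attained strongly on that pair, provided the local slope of $g$ there is already close to $\|g\|$ and $c$ is chosen so the two contributions reinforce.

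The key steps, in order, would be: (1) fix a sequence $\eps_n \downarrow 0$; (2) using locality of $g$, and then of the successive modifications, choose for each $n$ a point $x_n$ and a radius $r_n>0$ so small that the balls $B(x_n,r_n)$ are pairwise at positive distance (this is where I must be careful — I would shrink radii greedily and possibly pass to a subsequence of candidate points, using that $E$ has no isolated points, so the balls can be taken genuinely disjoint and separated); (3) on each such ball build, via McShane extension of an explicit distance-type function, a norm-$1$ Lipschitz $\psi_n$ with $\supp \psi_n \subset B(x_n,r_n)$ that strongly attains its norm; (4) set $g_n = g + c_n \psi_n$ where $c_n>0$ is chosen small (so $\|g-g_n\|\to 0$ is \emph{not} needed — only weak convergence is) but large enough relative to $\eps_n$ that the strong attainment of $c_n\psi_n$ on its pair dominates, i.e. $g_n \in \SA(E)$; (5) check $\|g_n\|$ stays bounded and $g_n - g = c_n\psi_n$ is a bounded separately supported sequence, hence $g_n - g \to 0$ weakly by Corollary~\ref{c_0 coroll}, giving $g_n \to g$ weakly.

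I expect the main obstacle to be step (4): ensuring that after adding the bump, the \emph{global} Lipschitz constant of $g_n$ is actually attained on the bump's pair rather than only approached, and that it is not attained on some far-away pair where $g$ itself nearly attains — one must choose the coefficient $c_n$ and the geometry of $\psi_n$ so that on $\supp\psi_n$ the slope of $g_n$ strictly exceeds $\|g_n\|-\,$(anything achievable elsewhere), while off the support $g_n=g$ contributes slope $\le\|g\|$. Concretely I would arrange that on the relevant pair the slope of $g_n$ equals $\|g\| + c_n\|\psi_n\| \ge \|g_n\|$ (with the reverse inequality automatic), forcing equality; the tension is that enlarging $c_n$ helps attainment but hurts the estimate $\|g_n-g\|$ — here the point is that we only need \emph{weak} density, so $c_n$ need not go to zero, only the supports must separate, which buys a lot of freedom. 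A secondary technical point is verifying separation of supports simultaneously with the locality selections; I would handle this inductively, choosing $x_{n+1}$ and $r_{n+1}$ only after $x_1,\dots,x_n,r_1,\dots,r_n$ are fixed, using locality applied to $g$ restricted away from $\bigcup_{k\le n}\overline{B(x_k,r_k)}$ together with the no-isolated-points property.
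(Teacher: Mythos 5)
Your overall architecture is the same as the paper's: perturb $g$ inside a sequence of separated balls so that each perturbed function lies in $\SA(E)$, then invoke Corollary~\ref{c_0 coroll} (the perturbations being bounded and separately supported) to get weak convergence. The gap is exactly where you predicted it, in step (4), and your proposed resolution does not close it. You want the slope of $g_n=g+c_n\psi_n$ on the bump's pair to equal $\|g\|+c_n$, but locality only gives a pair on which the slope of $g$ exceeds $\|g\|-\eps_n$, never a pair where it equals $\|g\|$ (if such a pair existed, $g$ would already belong to $\SA(E)$ and there would be nothing to prove). So the slope of $g_n$ on the designated pair is only $>\|g\|-\eps_n+c_n$, while $\|g_n\|$ can be strictly larger: any pair $(s,t)$ meeting $\supp\psi_n$ on which the slope of $g$ is closer to $\|g\|$ than $\|g\|-\eps_n$ and the slope of $\psi_n$ is still near $1$ pushes the supremum up to nearly $\|g\|+c_n$, and that larger supremum need not be attained. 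This is not a hypothetical worry: take $E=[0,1]$, $g$ with $g'=\1_A$ for $A$ nowhere dense of positive measure, and $\psi$ a tent function; then $(g+c\psi)'=\1_A\pm c$ on the tent, so $\|g+c\psi\|=1+c$ as soon as $A$ meets the rising side of the tent in positive measure, yet by Lemma~\ref{str-att lemma} strong attainment would force $g'=1$ on a whole subinterval of $A$, which is impossible. Placing the bump where $g$ nearly attains its norm, as you propose, actually makes matters worse, since it guarantees an abundance of nearby competing pairs with slope close to $\|g\|$.

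The paper's construction avoids this entirely. Instead of adding a prescribed bump, one restricts $g$ to $E_n=(E\setminus U_n)\cup\{x_n,y_n\}$, where $U_n$ is a ball of radius $r_n$ centred at $x_n$ and $\rho(x_n,y_n)=\eps_n r_n$ is tiny compared with $r_n$, and then redefines the value at the single point $x_n$ so that the slope on $(x_n,y_n)$ becomes $1+2\eps_n>\|g\|=1$. On this restricted domain the only ``new'' pairs are those containing $x_n$, and for $y\notin U_n$ the denominator $\rho(x_n,y)\geq r_n$ dilutes the perturbation (of size about $\eps_n r_n$), so the Lipschitz constant of the restricted function is exactly $1+2\eps_n$ and is attained at $(x_n,y_n)$. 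McShane's theorem then extends the function to all of $E$ without increasing this constant, so attainment survives globally. Note that no information about $g$ near $x_n$ is needed; locality of $E$ enters only through the absence of isolated points (to find $y_n$) and through Corollary~\ref{c_0 coroll}. If you wish to keep your ``$g+c_n\psi_n$'' formulation, you would have to let $\psi_n$ be the ($g$-dependent, McShane-dependent) difference produced by this procedure rather than an a priori chosen norm-one bump.
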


\begin{proof}
Without loss of generality we may assume $\|g\| = 1$. Let us choose a sequence of pairwise disjoint balls $\{U_n\}$ with corresponding radii $r_n > 0$, centers $x_n$ and such that $0 \not\in U_n$. For every $n\in \N$, select $\eps_n \in (0, \frac12)$ and $y_n \in U_n$ with $0 < \rho(x_n,y_n) = \eps_n r_n$ (here we use the absence of isolated points). For a fixed $n$, consider $E_n = (E \setminus U_n) \cup \{x_n, y_n\} \subset E$. Define $h_n: E_n \longrightarrow \R$ as follows: $h_n(t) = g(t)$ for $t \in E_n  \setminus \{x_n\}$ and $h_n(x_n) = g(y_n) - s_n (1+2\eps_n) \rho(x_n,y_n) $, where $s_n = \sign(g(y_n) - g(x_n))$. We claim that the Lipschitz constant of $h_n$ is attained at the pair $(x_n, y_n)$ and equals $1+2\eps_n$. At first,
$$
\frac{\vert h_n(x_n)-h_n(y_n)\vert}{\rho(x_n,y_n)} = 1+2\eps_n.
$$
At second, if $x,y \in E_n  \setminus \{x_n\}$, then
$$
\frac{\vert h_n(x)-h_n(y)\vert}{\rho(x_n,y_n)} = \frac{\vert g(x)-g(y)\vert}{\rho(x,y)} \le 1.
$$
So, it remains to check that for every $y \in E  \setminus U_n$
$$
\frac{\vert h_n(x_n)-h_n(y)\vert}{\rho(x_n,y)}  \le 1+2\eps_n.
$$
 In fact,
\bea
\frac{\vert h_n(x_n)-h_n(y)\vert}{\rho(x_n,y)} &=& \frac{\vert g(y_n)  - s_n (1+2\eps_n) \rho(x_n,y_n)  - g(y)\vert}{\rho(x_n,y)}  \\
&=&  \frac{\vert g(y_n)  - s_n(1+2\eps_n) \eps_n r_n  - g(y)\vert}{\rho(x_n,y)} \\  &\le&  \frac{\vert g(x_n)   - g(y)\vert}{\rho(x_n,y)} + \frac{\vert g(y_n) - g(x_n)  - s_n(1+2\eps_n) \eps_n r_n  \vert}{\rho(x_n,y)} \\  &\le& 1 + \frac{ (1+2\eps_n) \eps_n r_n }{r_n} \le  1+2\eps_n.
\eea
The claim is proved. Now, applying McShane's
extension theorem, we extend $h_n$ to a functional $g_n$ on the whole of $E$ preserving its Lipschitz constant. Then, $g_n \in \Lip (X)$, $\|g_n\| = 1+2\eps_n$ and
$$
\frac{\vert g_n(x_n)-g_n(y_n)\vert}{\rho(x_n,y_n)}=\frac{\vert h_n(x_n)-h_n(y_n)\vert}{\rho(x_n,y_n)} = 1+2\eps_n,
$$
so $g_n \in \SA(E)$. On the other hand, $\supp(g_n - g) \subset U_n$, so the functionals $g_n - g$ are disjointly supported. According to Corollary~\ref{c_0 coroll}, this implies that $\{g_n - g\}$ converges weakly to $0$.
\end{proof}

\begin{remark}\label{c_00 rem}
Choosing in the above proof sequences $\{\eps_n\}$ and $\{r_n\}$ converging to zero, one gets additional properties of the approximating sequence $g_n$. Namely, {\slshape one can get that $\{\|g_n\|\} \longrightarrow \|g\|$ and that $\{g_n\}$ converges uniformly on the whole of $E$ to $g$.}
\end{remark}

As Banach spaces are local metric spaces, we may particularize Theorem~\ref{c_0 theor} and Remark~\ref{c_00 rem} to this case.

\begin{corollary}
Let $X$ be a Banach space. Then, for every $g \in \Lip (X)$ there is a sequence $\{g_n\}$ in $\SA(X)$ which converges weakly to $g$. Moreover, the sequence $\{g_n\}$ can be chosen in such a way that one also has that $\{\|g_n\|\} \longrightarrow \|g\|$ and that $\{g_n\}$ converges uniformly on the whole of $X$ to $g$.
\end{corollary}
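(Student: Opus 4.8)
The plan is to obtain the statement as a direct specialization of Theorem~\ref{c_0 theor} and Remark~\ref{c_00 rem}, the only thing to check being that a Banach space, pointed at the origin, falls into the class of local metric spaces to which those results apply. So the first step I would carry out is to verify that $X$, equipped with the distinguished point $0$ (which is legitimate since $X\setminus\{0\}\neq\emptyset$), is metrically convex: given distinct $x,y\in X$, the affine map $t\mapsto \bigl(1-\tfrac{t}{\|x-y\|}\bigr)x+\tfrac{t}{\|x-y\|}y$ sends $[0,\|x-y\|]$ onto the segment $\conv\{x,y\}\subset X$ and is an isometry, because $\bigl\|\bigl((1-s)x+sy\bigr)-\bigl((1-t)x+ty\bigr)\bigr\|=|s-t|\,\|x-y\|$. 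By \cite[Proposition 2.3]{IvKadWer} (recalled just before Lemma~\ref{c_0 lemma}), metric convexity implies locality; alternatively one can verify \eqref{eq6} directly for $X$.

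Once $X$ is known to be a local metric space, Theorem~\ref{c_0 theor} applies verbatim and yields, for every $g\in\Lip(X)$, a sequence $\{g_n\}$ in $\SA(X)$ converging weakly to $g$, which is the first assertion. For the ``moreover'' part I would invoke Remark~\ref{c_00 rem}: in the construction inside the proof of Theorem~\ref{c_0 theor} one is free to pick the radii $r_n$ of the pairwise disjoint balls $U_n$ and the parameters $\eps_n\in(0,\tfrac12)$. Choosing $\eps_n\to 0$ and $r_n\to 0$ forces (after the harmless normalization $\|g\|=1$, undone by rescaling in the general case) $\|g_n\|=1+2\eps_n\to 1=\|g\|$; moreover $\supp(g_n-g)\subset U_n$ and the Lipschitz estimates used in that proof bound $|g_n-g|$ on $U_n$ by a quantity comparable to $\eps_n r_n\to 0$, so $g_n\to g$ uniformly on all of $X$.

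I do not expect any genuine obstacle: the content is entirely in Theorem~\ref{c_0 theor}, Corollary~\ref{c_0 coroll} and Remark~\ref{c_00 rem}, which were already proved in the general local setting. The only minor point worth making explicit is that the distinguished point $0$ plays no role in the argument, since the balls $U_n$ in the construction are chosen with $0\notin U_n$; hence everything transfers to the Banach space case without change.
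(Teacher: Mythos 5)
Your proposal is correct and follows exactly the paper's route: the corollary is obtained by observing that a Banach space is metrically convex, hence a local metric space, and then specializing Theorem~\ref{c_0 theor} and Remark~\ref{c_00 rem}. The extra details you supply (the explicit isometry onto $\conv\{x,y\}$ and the bound on $|g_n-g|$ over $U_n$ of radius $r_n\to 0$) are consistent with the construction in the proof of Theorem~\ref{c_0 theor}.
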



\section{Bishop-Phelps theorems for seminorms}\label{sec:seminorms}

Let $X$ be a Banach space. We write $\mathrm{Sem}(X)$ for the set of all continuous seminorms on $X$. As usual, we consider the set $\mathrm{Sem}(X)$ as a closed cone of $\ell_\infty(S_X,\R)$, where for a set $\Gamma$, $\ell_\infty(\Gamma,\R)$ denotes the Banach space of all bounded functions from $\Gamma$ into $\R$ endowed with the uniform norm $\|\cdot\|_\infty$. We need a couple of easy remarks on continuous seminorms: that they can be viewed as Lipschitz functionals with equality of norms and that can be expressed in terms of the norm of a bounded linear operator. We use the notation $L(X,Y)$ for the Banach space of all bounded linear operators between the Banach spaces $X$ and $Y$. Only in this section, we will write $\|\cdot\|_{\textrm{Lip}}$ to denote the Lipschitz norm when there is a possible confusion.

\begin{remark}\label{seminorms-remark} Let $X$ be a Banach space.
\begin{enumerate}
\item[(a)] $\mathrm{Sem}(X)\subseteq \Lip(X)$ and $\|p\|_{\textrm{Lip}}=\|p\|_\infty$ for every $p\in \mathrm{Sem}(X)$.
\item[(b)] For every $p\in\mathrm{Sem}(X)$, there exist a Banach space $Y$ and $T\in L(X,Y)$ such that $p(x)=\|Tx\|$ for all $x\in X$, which obviously satisfies $\|T\|=\|p\|$. Actually, one can consider $Y=\ell_\infty(\Gamma,\R)$ where $\Gamma$ is a set whose cardinality equals the density character of $X$.
\end{enumerate}
\end{remark}

\begin{proof}
(a) is a consequence of the triangle inequality. Indeed, for $p\in\mathrm{Sem}(X)$ and $x,y\in X$ with $x\neq y$, we have
$$
\frac{|p(x)-p(y)|}{\|x-y\|} \leq \frac{p(x-y)}{\|x-y\|}= p\left(\frac{x-y}{\|x-y\|}\right)\leq \|p\|_\infty,
$$
so $\|p\|_{\textrm{Lip}}\leq \|p\|_\infty$. Conversely, for every $z\in S_X$ we have
$$
p(z)=\frac{p(z)-p(0)}{\|z-0\|}\leq \|p\|_{\textrm{Lip}}.
$$
(b). Let $Y$ be the completion of the quotient $(X,p)/\ker p$ and let $T\in L(X,Y)$ be the composition of the natural quotient map from $X$ onto $(X,p)/\ker p$ and the inclusion into $Y$. It is then obvious that $p(x)=\|Tx\|$ for every $x\in X$. For the moreover part, just observe that the density character of $Y$ is smaller or equal than the one of $X$, so $Y$ embeds isometrically into $\ell_\infty(\Gamma,\R)$ and one can view $T$ as an element of $L(X,\ell_\infty(\Gamma,\R))$.
\end{proof}

Our goal in this section is to study norm attaining seminorms. As we have two norms defined on $\mathrm{Sem}(X)$ and also we have two ways for a Lipschitz functional to attain the norm, we have three possibilities. As a matter of fact, all of them are the same for continuous seminorms.

\begin{lemma}\label{lemma:NAseminorm}
Let $X$ be a Banach space and $p\in\mathrm{Sem}(X)$. Then, the following conditions are equivalent:
\begin{enumerate}
\item[(i)] $p$ attains its norm as an element of $\ell_\infty(S_X,\R)$ (i.e.\ there exists $z\in S_X$ such that $p(z)=\|p\|$),
\item[(ii)] $p\in \SA(X)$ (i.e.\ there exists a pair $(x,y)\in X\times X$ with $x\neq y$ such that $\frac{|p(x)-p(y)|}{\|x-y\|}=\|p\|$),
\item[(iii)] $p\in \DA(X)$ (i.e.\ there exists a sequence of pairs $\{(x_n,y_n)\}$ in $X\times X$ with $x_n\neq y_n$ for all $n$ such that $\left\{\frac{x_n-y_n}{\|x_n-y_n\|}\right\}$ is convergent and $\left\{\frac{p(x_n)-p(y_n)}{\|x_n-y_n\|}\right\}\longrightarrow \|p\|$,
\item[(iv)] for every Banach space $Y$ and every such an operator $T\in L(X,Y)$ that $p(x)=\|Tx\|$ ($x\in X$), $T$ attains its norm (i.e.\ there is $z\in S_X$ such that $\|T z\|=\|T\|=\|p\|$),
\item[(v)] there exist a Banach space $Y$ and a norm attaining operator $T\in L(X,Y)$ such that $p(x)=\|Tx\|$ ($x\in X$).
\end{enumerate}
In case that $p$ satisfies any (all) of the above condition, we will say that $p$ is a \emph{norm attaining seminorm}.
\end{lemma}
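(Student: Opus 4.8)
The plan is to close the ring of implications (i)$\,\Rightarrow\,$(ii)$\,\Rightarrow\,$(iii)$\,\Rightarrow\,$(i) together with (i)$\,\Rightarrow\,$(iv)$\,\Rightarrow\,$(v)$\,\Rightarrow\,$(i), which makes all five statements equivalent. Throughout I will freely use Remark~\ref{seminorms-remark}: part (a) gives $\|p\|_{\textrm{Lip}}=\|p\|_\infty$, hence $\|p\|=\sup_{z\in S_X}p(z)$, and part (b) produces at least one pair $(Y,T)$ with $p(x)=\|Tx\|$ for all $x\in X$ and $\|T\|=\|p\|$.

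First the three ``Lipschitz-side'' implications. For (i)$\,\Rightarrow\,$(ii), if $p(z)=\|p\|$ for some $z\in S_X$, I evaluate the Lipschitz quotient of $p$ on the pair $(z,0)$: since $p(0)=0$ and $\|z\|=1$ this equals $p(z)=\|p\|$, so $p\in\SA(X)$. For (ii)$\,\Rightarrow\,$(iii), given a pair $(x,y)$ with $x\neq y$ realizing the Lipschitz norm, I replace it by $(y,x)$ if necessary so that $p(x)-p(y)\ge 0$, and then I feed the constant sequence $(x_n,y_n)=(x,y)$ into Definition~\ref{dir-att def}: the normalized vector $\frac{x-y}{\|x-y\|}$ is trivially convergent and the difference quotient is constantly $\|p\|$. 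The only step that requires a genuine argument is (iii)$\,\Rightarrow\,$(i): let $\{(x_n,y_n)\}$ witness $p\in\DA(X)$, write $u_n:=\frac{x_n-y_n}{\|x_n-y_n\|}\to u\in S_X$, and use the triangle inequality for the seminorm, $p(x_n)-p(y_n)\le p(x_n-y_n)$, to get $\frac{p(x_n)-p(y_n)}{\|x_n-y_n\|}\le p(u_n)$. Letting $n\to\infty$ and using continuity of $p$ together with $p(u)\le\|p\|$ yields $\|p\|\le p(u)\le\|p\|$, so $p(u)=\|p\|$ with $u\in S_X$, which is (i).

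Now the two ``operator-side'' implications and the link back to (i). For (i)$\,\Rightarrow\,$(iv), take any Banach space $Y$ and any $T\in L(X,Y)$ with $p(x)=\|Tx\|$; then $\|T\|=\sup_{z\in S_X}\|Tz\|=\sup_{z\in S_X}p(z)=\|p\|$, and for the point $z_0\in S_X$ given by (i) one has $\|Tz_0\|=p(z_0)=\|p\|=\|T\|$, so $T$ attains its norm at $z_0$. For (iv)$\,\Rightarrow\,$(v), I simply apply (iv) to the particular pair $(Y,T)$ furnished by Remark~\ref{seminorms-remark}(b). For (v)$\,\Rightarrow\,$(i), if $T$ attains its norm at $z\in S_X$ and $p(x)=\|Tx\|$, then $p(z)=\|Tz\|=\|T\|=\|p\|$, which is (i).

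This closes both cycles and establishes the equivalence of (i)--(v). I do not expect any real obstacle here: apart from routine bookkeeping the single substantive point is the triangle-inequality estimate in (iii)$\,\Rightarrow\,$(i), where subadditivity of the seminorm lets one pass from a ``directional'' difference quotient to the value of $p$ on the limiting direction.
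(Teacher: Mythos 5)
Your proof is correct. The only substantive difference from the paper is the arrangement of the implications: the paper runs a single chain (i)$\Rightarrow$(ii)$\Rightarrow$(iii)$\Rightarrow$(iv)$\Rightarrow$(v)$\Rightarrow$(i), placing the one nontrivial step at (iii)$\Rightarrow$(iv), where it writes $u=\lim\frac{x_n-y_n}{\|x_n-y_n\|}$ and uses $\bigl|\|Tx_n\|-\|Ty_n\|\bigr|\le\|T(x_n-y_n)\|$ to conclude $\|Tu\|\ge\|p\|$. You instead close two cycles hubbed at (i), with the nontrivial step at (iii)$\Rightarrow$(i) via $|p(x_n)-p(y_n)|\le p(x_n-y_n)$ and continuity of $p$. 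Under the identification $p=\|T\cdot\|$ these are literally the same reverse-triangle-inequality estimate, so nothing new is gained or lost mathematically; your version is marginally more self-contained in that the equivalence of (i), (ii), (iii) is established without ever invoking the operator representation from Remark~\ref{seminorms-remark}(b), which you then only need for (iv)$\Rightarrow$(v). Your explicit verification of (i)$\Rightarrow$(iv) (that \emph{every} representing $T$ attains its norm at the point where $p$ attains its supremum, using $\|T\|=\sup_{z\in S_X}p(z)=\|p\|$) fills in a step the paper leaves implicit in its chain. All the individual steps check out.
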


\begin{proof}
For (i)$\Rightarrow$(ii), consider the pair $(z,0)$. (ii)$\Rightarrow$(iii), (iv)$\Rightarrow$(v) and (v)$\Rightarrow$(i) are evident.

(iii)$\Rightarrow$(iv). Write $u=\lim \frac{x_n-y_n}{\|x_n-y_n\|}$, so $T(u)=\lim \frac{T(x_n)-T(y_n)}{\|x_n-y_n\|}$. Then
\begin{align*}
\|T(u)\| &=\lim \frac{\|T(x_n)-T(y_n)\|}{\|x_n-y_n\|}\geq \lim \frac{\bigl|\|T(x_n)\|-\|T(y_n)\|\bigr|}{\|x_n-y_n\|} \\ &= \lim \frac{|p(x_n)-p(y_n)|}{\|x_n-y_n\|}=\|p\|.\qedhere
\end{align*}
\end{proof}

As a consequence of this result, we may provide an interesting example which shows that there is a Lipschitz version of James theorem, but in this case it characterizes finite-dimensionality instead of reflexivity.

\begin{example}\label{example:NAseminorm}
{\slshape For every infinite-dimensional Banach space $X$, there is a continuous seminorm $p\in\mathrm{Sem}(X)$ which does not attain its norm.\ } As a consequence, {\slshape if $\DA(X)=\Lip(X)$ for a Banach space $X$, then $X$ is finite-dimensional.}
\end{example}

The example is based on \cite[Lemma~2.2]{MarMerPay}, but we include the details for the sake of completeness.

\begin{proof}
Since $X$ is infinite-dimensional, there is a sequence $\{x_n^*\}$ in $S_{X^*}$ which is weak-star convergent to $0$ (this is the Josefson-Nissenzweig theorem). Now, consider the seminorm $p:X\longrightarrow \R$ given by
$$
p(x)=\max\left\{\frac{n\,|x_n^*(x)|}{n+1}\,:\, n\in\N\right\} \qquad (x\in X)
$$
and observe that $\|p\|=1$ but $p(x)<1$ for every $x\in S_X$.
\end{proof}

We are ready to prove the uniform density of the set of norm attaining seminorms. Actually, we may prove more.

\begin{prop}[Bishop-Phelps-Bollob\'{a}s theorem for seminorms in the uniform norm]\label{prop:seminDensistyUnifNorm}
Let $X$ be a Banach space. Then for every $\eps>0$ there is such a $\delta>0$ that for every $p_0\in\mathrm{Sem}(X)$ with $\|p_0\|=1$ and every $x_0\in S_X$ with $p_0(x_0)>1-\delta$, there exist $p\in\mathrm{Sem}(X)$ with $\|p\|=1$ and $x\in S_X$ such that
$$
p(x)=1=\|p\|,\qquad \|x-x_0\|<\eps,\quad \text{and} \quad \|p-p_0\|_\infty =\sup_{x\in S_X}|p(x)-p_0(x)| <\eps.
$$
\end{prop}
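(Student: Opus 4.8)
The plan is to reduce the statement to the classical Bishop--Phelps--Bollob\'as theorem (Theorem~\ref{th:BPB}) via the operator representation of seminorms from Remark~\ref{seminorms-remark}(b). Given $p_0 \in \mathrm{Sem}(X)$ with $\|p_0\|=1$ and $x_0 \in S_X$ with $p_0(x_0) > 1-\delta$, write $p_0(x) = \|T_0 x\|$ for a bounded operator $T_0 : X \longrightarrow Y$ with $Y = \ell_\infty(\Gamma,\R)$ and $\|T_0\| = 1$. Pick a functional $y^* \in S_{Y^*}$ with $y^*(T_0 x_0) > 1-\delta$ (this exists since $\|T_0 x_0\| = p_0(x_0) > 1-\delta$), and set $x_0^* = T_0^* y^* \in X^*$, so that $\|x_0^*\| \le 1$ and $x_0^*(x_0) > 1-\delta$. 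Apply Theorem~\ref{th:BPB} to the pair $(x_0, x_0^*)$ to obtain $x \in S_X$ and $x^* \in S_{X^*}$ with $x^*(x) = 1$ and $\max\{\|x-x_0\|, \|x^*-x_0^*\|\} \le \sqrt{2\delta}$.

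The second step is to build the new seminorm. The natural candidate is
$$
p(z) = \max\bigl\{|x^*(z)|,\ (1-\eta)\,p_0(z)\bigr\} \qquad (z \in X)
$$
for a suitably small parameter $\eta = \eta(\delta) \to 0$, or alternatively $p(z)=\|Sz\|$ where $S : X \longrightarrow \R \oplus_\infty Y$ is given by $Sz = (x^*(z), (1-\eta)T_0 z)$; these are the same thing. Clearly $p$ is a continuous seminorm with $\|p\| = \max\{1, 1-\eta\} = 1$, and it attains its norm at $x$ because $p(x) \ge |x^*(x)| = 1 = \|p\|$. It remains to estimate $\|p - p_0\|_\infty$. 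For $z \in S_X$ we have $|p(z) - p_0(z)| \le \max\{|x^*(z)| - (1-\eta)p_0(z),\ \eta p_0(z)\}$ when $p(z)$ is the first term, controlled by $\|x^*-x_0^*\| + \eta + (\text{gap between } p_0(z) \text{ and } 1)$; more carefully, one uses $|p(z)-p_0(z)| \le \max\{ |x^*(z) - x_0^*(z)|, \eta \} + |x_0^*(z) - p_0(z)|$ — but this last term need not be small. So I expect the cleaner route is: $|p(z) - p_0(z)| \le \bigl| p(z) - \|S_0 z\| \bigr|$ where $S_0 z = (x_0^*(z), T_0 z)$, plus $\bigl| \|S_0 z\| - p_0(z)\bigr|$, and since $|x_0^*(z)| \le \|T_0 z\| = p_0(z)$ the second difference vanishes, reducing everything to $\|S - S_0\| \le \|x^* - x_0^*\| + \eta \le \sqrt{2\delta} + \eta$. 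Choosing $\delta$ so that $\sqrt{2\delta} + \eta < \eps$ and also $\sqrt{2\delta} < \eps$ (for the $\|x - x_0\|$ bound) finishes the proof.

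The one point requiring care — and the main obstacle — is the inequality $|x_0^*(z)| \le p_0(z)$, i.e.\ that $x_0^* = T_0^* y^*$ really is dominated by $p_0$ pointwise: indeed $|x_0^*(z)| = |y^*(T_0 z)| \le \|y^*\|\,\|T_0 z\| = p_0(z)$, which holds because $\|y^*\|=1$. This is what makes the seminorm $\|S_0 z\| = \max\{|x_0^*(z)|, p_0(z)\}$ coincide with $p_0(z)$, collapsing one of the two error terms. The rest is a routine triangle-inequality bookkeeping with the operator norm on $\R \oplus_\infty Y$, and the quantifier order ($\delta$ depending only on $\eps$, uniformly in $p_0$ and $x_0$) is automatic since the dependence is entirely through the universal constant $\sqrt{2\delta}$ in Theorem~\ref{th:BPB}.
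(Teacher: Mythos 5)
Your proof is correct, but it follows a genuinely different route from the paper's. The paper represents $p_0$ as $z\mapsto\|T_0z\|$ for an operator $T_0\in L(X,\ell_\infty(\Gamma,\R))$ and then invokes the Bishop--Phelps--Bollob\'as theorem \emph{for operators} into $\ell_\infty(\Gamma,\R)$ from \cite[Theorem~2.2]{AAGM2}, obtaining a nearby norm-attaining operator $T$ and setting $p(z)=\|Tz\|$; the whole argument is a two-line reduction to that (nontrivial) black box. You instead use only the scalar Theorem~\ref{th:BPB}: you extract from $p_0$ a dominated functional $x_0^*=T_0^*y^*$ with $|x_0^*|\le p_0$ and $x_0^*(x_0)>1-\delta$, perturb it to a norm-attaining $x^*$, and build the new seminorm explicitly as $p=\max\{|x^*|,(1-\eta)p_0\}$. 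Your key observation --- that $|x_0^*(z)|\le p_0(z)$ forces $\max\{|x_0^*(z)|,p_0(z)\}=p_0(z)$, so the error collapses to $\|x^*-x_0^*\|$ --- is exactly right, and all the estimates check out. Two small remarks. First, you do not need the operator $T_0$ at all to produce $x_0^*$: Hahn--Banach applied to the seminorm $p_0$ directly gives $x_0^*\in X^*$ with $x_0^*(x_0)=p_0(x_0)$ and $|x_0^*|\le p_0$. Second, the parameter $\eta$ is superfluous: with $\eta=0$ one still has $\|p\|_\infty=1=p(x)$, and since $t\mapsto\max\{t,s\}$ is $1$-Lipschitz and $p_0=\max\{|x_0^*|,p_0\}$ pointwise, one gets $\|p-p_0\|_\infty\le\|x^*-x_0^*\|\le\sqrt{2\delta}$ at once. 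What each approach buys: the paper's proof is shorter modulo the cited operator-BPB result and keeps $p$ in the same ``operator form'' as $p_0$; yours is self-contained (it uses only Theorem~\ref{th:BPB}, already stated in the introduction), gives an explicit formula for $p$, and yields the same sharp $\sqrt{2\delta}$ estimates in both the point and the seminorm.
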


\begin{proof}
Let $\Gamma$ be a set whose cardinality equals to the density character of $X$. It is shown in \cite[Theorem~2.2]{AAGM2}, that for every $\eps>0$, there is $\delta>0$ such that whenever $T_0\in L(X,\ell_\infty(\Gamma,\R))$ with $\|T_0\|=1$ and $x_0\in S_X$ satisfy that $\|T_0(x_0)\|>1-\delta$, there exist $T\in L(X,\ell_\infty(\Gamma,\R))$ with $\|T\|=1$ and $x\in S_X$ such that
\begin{equation}\label{eq:seminorm}
\|Tx\|=1=\|T\|,\qquad \|x-x_0\|<\eps,\qquad \|T-T_0\|<\eps.
\end{equation}
Now, for a given $\eps>0$, consider such a $\delta>0$.
By Remark~\ref{seminorms-remark}, there is $T_0\in L(X,\ell_\infty(\Gamma,\R))$ such that $p_0(x)=\|T_0(x)\|$ for all $x\in X$. As $p_0(x_0)>1-\delta$, we have $\|T_0(x_0)\|>1-\delta$ and we may find $T\in L(X,\ell_\infty(\Gamma,\R))$ with $\|T\|=1$ and $x\in S_X$ satisfying \eqref{eq:seminorm}. Consider $p\in\mathrm{Sem}(X)$ be given by $p(z)=\|T(z)\|$ for every $z\in X$. Then, $\|p\|=1=p(x)$, $\|x-x_0\|<\eps$ and
$$
|p(z)-p_0(z)|=\bigl|\|T(z)\|-\|T_0(z)\|\bigr| \leq \|T(z)-T_0(z)\|\leq \|T-T_0\|<\eps
$$
for every $z\in S_X$, so $\|p-p_0\|_\infty<\eps$, finishing the proof.
\end{proof}

One may wonder whether the above result is also true replacing $\|p-p_0\|_\infty$ by $\|p-p_0\|_{\textrm{Lip}}$. In general, $\|p-q\|_\infty \leq \|p-q\|_{\textrm{Lip}}$ for all $p,q\in\mathrm{Sem}(X)$, so the uniform convergence of seminorms is weaker than the Lipschitz convergence, and the next example shows that it is indeed strictly weaker.

\begin{remark}
{\slshape The uniform convergence of seminorms does not force the Lipschitz convergence, even in the finite-dimensional case.\ } Indeed, let $X$ be the two dimensional space $\R^2$ endowed with the maximum norm, let $p_0\in\mathrm{Sem}(X)$ be defined by $p_0(x_1,x_2)=|x_1|$ for every $(x_1,x_2)\in X$, and for every $n\in \N$ let $p_n\in\mathrm{Sem}(X)$ be defined by $p_n(x_1,x_2)=\max\{|x_1|,\tfrac{1}{n}|x_2|\}$ for every $(x_1,x_2)\in X$. On the one hand, $\|p_n-p_0\|_\infty\leq 1/n$, so $\{p_n\}$ is uniformly convergent to $p_0$. On the other hand,
\begin{equation*}
\|p_n-p_0\|_{\textrm{Lip}}\geq \frac{\bigl|\bigl(p_n(0,n)-p_0(0,n)\bigr) - \bigl(p_n(1,n)-p_0(1,n)\bigr)\bigr|}{\|(0,n)-(1,n)\|}=1.\qedhere
\end{equation*}
\end{remark}

Our last result in this section shows that the Radon-Nikod\'{y}m property is sufficient to assure that norm attaining seminorms are dense in the Lipschitz sense.

\begin{prop}[Bishop-Phelps theorem for seminorms in the Lipschitz norm for RNP spaces]\label{prop:semiRNP} Let $X$ be a Banach space with the Radon-Nikod\'{y}m property. Then, the set of norm attaining seminorms is Lipschitz-norm dense in $\mathrm{Sem}(X)$.
\end{prop}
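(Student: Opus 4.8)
The plan is to derive this from \emph{Stegall's variational principle} applied directly to the seminorm $p_0$ on the unit ball $B_X$, obtaining a small linear perturbation $x^*\in X^*$ after which $p_0+x^*$ attains its maximum on $B_X$, and then to take as the approximating norm attaining seminorm precisely $p:=p_0+|x^*|$. The crucial point that dictates this strategy is that the perturbation of $p_0$ must be small in the \emph{Lipschitz} norm, not merely in the uniform norm: this is why one cannot simply write $p_0(x)=\|T_0x\|$ and perturb $T_0$ in operator norm (that only controls $\|p-p_0\|_\infty$), nor replace $p_0$ by $\max\{p_0,\delta|x^*|\}$ (the $\max$ destroys Lipschitz-smallness of the difference). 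The remedy is Remark~\ref{seminorms-remark}(a): the difference of two continuous seminorms has equal Lipschitz and uniform norms, so an \emph{additive} perturbation by the seminorm $|x^*|$ with $\|x^*\|$ small is automatically small in the Lipschitz norm.

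Concretely, I would first reduce by homogeneity to the case $\|p_0\|=1$ (the zero seminorm attains its norm trivially, and a nonzero seminorm is a positive multiple of a norm-one one). Since $X$ has the RNP, the bounded closed convex set $B_X$ is an RNP set, and $p_0$ restricted to $B_X$ is continuous (hence upper semicontinuous) and bounded, taking values in $[0,1]$. By the maximization form of Stegall's version of the Bourgain--Stegall non-linear optimization principle, for the given $\varepsilon>0$ there exist $x^*\in X^*$ with $\|x^*\|<\varepsilon$ and a point $x_1\in B_X$ with
$p_0(x_1)+x^*(x_1)=\sup_{x\in B_X}\bigl(p_0(x)+x^*(x)\bigr)$ (we only need the supremum to be attained).

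Next I would set $p:=p_0+|x^*|\in\mathrm{Sem}(X)$ and verify it works. The key claim is that $p$ attains its supremum over $B_X$ at $x_1$: for $x\in B_X$, if $x^*(x)\geq 0$ then $p(x)=p_0(x)+x^*(x)\leq p_0(x_1)+x^*(x_1)\leq p(x_1)$, while if $x^*(x)<0$ then, using $-x\in B_X$, $p_0(-x)=p_0(x)$ and $|x^*(x)|=x^*(-x)$, we get $p(x)=p_0(-x)+x^*(-x)\leq p_0(x_1)+x^*(x_1)\leq p(x_1)$. Hence $\|p\|=\|p\|_\infty=\sup_{B_X}p=p(x_1)$. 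Since $p\geq p_0$ pointwise we have $\|p\|\geq\|p_0\|=1>0$, and from $p(x_1)\leq\|p\|_{\textrm{Lip}}\,\|x_1\|=\|p\|\,\|x_1\|$ together with $\|x_1\|\leq 1$ we conclude $\|x_1\|=1$. Thus $p$ attains its norm at $x_1\in S_X$, i.e.\ $p$ is a norm attaining seminorm by Lemma~\ref{lemma:NAseminorm}. Finally $p-p_0=|x^*|$ is a continuous seminorm, so by Remark~\ref{seminorms-remark}(a) $\|p-p_0\|_{\textrm{Lip}}=\|p-p_0\|_\infty=\|x^*\|<\varepsilon$.

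The only substantive ingredient is Stegall's principle, which is exactly where the Radon--Nikod\'{y}m property is used; everything after it is an elementary verification. The part I would expect to need the most care in the writeup is making the ``Lipschitz-small perturbation'' heuristic precise and explaining why the additive form $p_0+|x^*|$ (rather than any operator-norm or $\max$-type perturbation) is the right one.
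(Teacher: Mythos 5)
Your proposal is correct and follows essentially the same route as the paper: apply Stegall's variational principle to $p_0$ on $B_X$ and take $p=p_0+|x^*|$, whose distance to $p_0$ in the Lipschitz norm equals $\|x^*\|$ by Remark~\ref{seminorms-remark}(a). The only (harmless) difference is that you invoke the linear-perturbation form of Stegall's principle and recover the $|x^*|$-version by the symmetry of $B_X$ and of $p_0$, whereas the paper cites Stegall's statement with the $|x^*|$ perturbation directly; your additional verification that the maximum is attained on $S_X$ is a detail the paper leaves implicit.
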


\begin{proof}
It is an easy consequence  of the following (quite not easy) Stegall's result \cite[Theorem on page 174]{Stegall} from which we are citing only the part which we need: let $D$  be an RNP set and $ f :D \longrightarrow \R$ be upper semicontinuous and bounded above. Then, for every $\eps >0$, there exists $x^* \in X^*$ with $\|x^*\| < \eps$ such that $f + |x^*|$ attains its supremum on $D$. Now, let us apply this theorem to $D = B_X$ and $f = p \in\mathrm{Sem}(X)$. Then, the corresponding   $q := p + |x^*|$ is again a continuous seminorm, and $q$ is norm-attaining. Finally,
\begin{equation*}
\|p-q\|_{\textrm{Lip}} = \bigl\| |x^*|   \bigr\|_{\textrm{Lip}}=  \bigl\| |x^*|  \bigr\|_\infty = \|x^*\| < \eps.\qedhere
\end{equation*}
\end{proof}


\section{Two preliminary results}\label{sec:freeLipspace}
In this section we demonstrate two preliminary results on the way to Theorem~\ref{main theorem}. The first one is a weak version of the Bishop-Phelps-Bollob\'{a}s theorem for Lipschitz functionals, valid for all Banach spaces, which can be of independent interest.

\begin{lemma}[Preliminary LipBPB Theorem] \label{lemLipBPB-prel}
Let $X$ be a  Banach space, $f\in \Lip (X)$, $\|f\| = 1$, $\delta \in (0, 2)$ and let  $x,y \in X, x\neq y$ be such elements that
\beq\label{eps-att-ineq}
 \frac{f(x)-f(y)}{\|x-y\|}>1-\delta.
\eeq
Then for every $h \in \Lip (X)$ with $\|h\| = 1$ and $ \frac{ h(x)-h(y)}{\|x-y\|} = 1$, there exists $g \in \Lip (X)$ with $\|g\|=1$, $\|f -g\| < \sqrt{2 \delta}$ and there exists a sequence of pairs $\{(v_n,w_n)\}$ in $X\times X$ with $v_n\neq w_n$ for every $n$, such that
$$
\frac{ h(v_n)-h(w_n)}{\|v_n-w_n\|}>1-\sqrt{2 \delta} \ \text{ for all $n \in \N$} \quad \text{and}\quad \lim_{n \to \infty}\frac{g(v_n)-g(w_n)}{\|v_n-w_n\|} = 1.
$$
\end{lemma}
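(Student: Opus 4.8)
The plan is to pass to the Lipschitz-free space $\F$ over $X$ --- whose dual is $\Lip(X)$ --- to apply the linear Bishop--Phelps--Bollob\'as theorem (Theorem~\ref{th:BPB}) inside $\F$, and then to translate the conclusion back into a statement about pairs of points of $X$. I will freely use the standard facts about $\F$ (see \cite{Weaver}): it is an isometric predual of $\Lip(X)$; for distinct $u,v\in X$ the \emph{molecule} $m_{u,v}:=\frac{\delta_u-\delta_v}{\|u-v\|}$ (where $\delta_u$ denotes the evaluation at $u$) belongs to $S_{\F}$; the pairing is $\langle \fhi,m_{u,v}\rangle=\frac{\fhi(u)-\fhi(v)}{\|u-v\|}$ for $\fhi\in\Lip(X)$; and $B_{\F}=\cconv\{m_{u,v}\colon u\neq v\}$.

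First I put $m:=m_{x,y}\in S_{\F}$, so that \eqref{eps-att-ineq} reads $\langle f,m\rangle>1-\delta$ and the hypothesis on $h$ reads $\langle h,m\rangle=1$. I pick $\delta_0\in(0,\delta)$ with $\langle f,m\rangle>1-\delta_0$, which is possible since $\langle f,m\rangle>1-\delta$. Applying Theorem~\ref{th:BPB} in $\F$ to the pair $(m,f)\in B_{\F}\times B_{\Lip(X)}$ produces $\mu\in S_{\F}$ and $g\in\Lip(X)$ with $\|g\|=1$, $\langle g,\mu\rangle=1$, and $\max\{\|m-\mu\|,\|f-g\|\}\le\sqrt{2\delta_0}<\sqrt{2\delta}$. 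In particular $\|f-g\|<\sqrt{2\delta}$, the first required property; and since $\|h\|=1$ and $\langle h,m\rangle=1$,
\[
\langle h,\mu\rangle\ \ge\ \langle h,m\rangle-\|h\|\,\|m-\mu\|\ \ge\ 1-\sqrt{2\delta_0}.
\]

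It remains to build the sequence $\{(v_n,w_n)\}$ out of $\mu$. As $\mu\in B_{\F}=\cconv\{m_{u,v}\}$, for each $n$ I write $\mu=\sum_{i=1}^{k_n}\lam_i^{(n)}m_i^{(n)}+r_n$ with $\lam_i^{(n)}>0$, $\sum_i\lam_i^{(n)}=1$, each $m_i^{(n)}$ a molecule $m_{v_i^{(n)},w_i^{(n)}}$, and $\|r_n\|<1/n$. Pairing this identity with $g$ and with $h$, and using $\langle g,m_i^{(n)}\rangle\le 1$ and $\langle h,m_i^{(n)}\rangle\le 1$, I get
\[
\sum_i\lam_i^{(n)}\bigl(1-\langle g,m_i^{(n)}\rangle\bigr)\le\tfrac1n,
\qquad
\sum_i\lam_i^{(n)}\bigl(1-\langle h,m_i^{(n)}\rangle\bigr)\le\sqrt{2\delta_0}+\tfrac1n.
\]
A Markov-type estimate then bounds the $\lam^{(n)}$-mass of $\{i\colon \langle g,m_i^{(n)}\rangle\le 1-1/\sqrt n\}$ by $1/\sqrt n$, and the $\lam^{(n)}$-mass of the ``bad'' set $\{i\colon \langle h,m_i^{(n)}\rangle\le 1-\sqrt{2\delta}\}$ by $(\sqrt{2\delta_0}+1/n)/\sqrt{2\delta}$. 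Since $\sqrt{2\delta_0}/\sqrt{2\delta}<1$, for all large $n$ these two sets together carry total mass strictly below $1$, so some index $i_n$ avoids both. Setting $(v_n,w_n):=(v_{i_n}^{(n)},w_{i_n}^{(n)})$ (so $v_n\neq w_n$ automatically), I obtain $\frac{h(v_n)-h(w_n)}{\|v_n-w_n\|}=\langle h,m_{i_n}^{(n)}\rangle>1-\sqrt{2\delta}$, while $1\ge\frac{g(v_n)-g(w_n)}{\|v_n-w_n\|}=\langle g,m_{i_n}^{(n)}\rangle>1-1/\sqrt n$, so this quotient tends to $1$. Discarding the finitely many small $n$ and reindexing gives the required sequence.

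The step I expect to be the crux is this last one: turning the abstract norm-attaining functional $\mu\in\F$ into genuine pairs of points of $X$ while controlling \emph{two} things simultaneously --- the $g$-quotient, which must converge to $1$, and the $h$-quotient, which must stay above the \emph{fixed} level $1-\sqrt{2\delta}$. This forces one to combine the description of $B_{\F}$ as the closed convex hull of molecules, the fact that $\mu$ was produced \emph{close} to the molecule $m$ at which $h$ attains its norm (so $\langle h,\mu\rangle>1-\sqrt{2\delta_0}$), and, crucially, the \emph{strict} inequality in \eqref{eps-att-ineq}, which gives the slack needed to take $\delta_0<\delta$ and thereby make the two mass bounds overlap.
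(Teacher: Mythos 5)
Your proof is correct, and it follows the paper's route exactly up to the crux: pass to the free space $\F$, read \eqref{eps-att-ineq} as $\langle f,m_{x,y}\rangle>1-\delta$, apply the linear Bishop--Phelps--Bollob\'as theorem there to obtain $g$ and a norm-attaining direction $\mu\in S_{\F}$ close to $m_{x,y}$, and then use $B_{\F}=\cconv W$ to replace $\mu$ by genuine molecules. The only genuine divergence is in that last extraction step, where one must find a single molecule that is simultaneously good for $g$ (quotient tending to $1$) and for $h$ (quotient staying above the fixed level $1-\sqrt{2\delta}$). You do this by a Markov-type estimate on the convex weights of an approximating combination of molecules: the $g$-bad indices carry mass at most $1/\sqrt{n}$ and the $h$-bad indices carry mass at most $(\sqrt{2\delta_0}+1/n)/\sqrt{2\delta}$, which is eventually bounded away from $1$ because $\delta_0<\delta$, so the two bad sets cannot exhaust the index set. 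The paper instead fixes $\nu$ with $\|m_{x,y}-\mu\|<\nu<\sqrt{2\delta}$, chooses convex combinations $z_n$ of molecules with $\langle g,z_n\rangle>1-\delta_n$ and $\|m_{x,y}-z_n\|<\nu$, and evaluates the auxiliary functional $\alpha_n h+(1-\alpha_n)g$ on $z_n$ with weights satisfying $\alpha_n\to 0$ and $\delta_n/\alpha_n\to 0$; any molecule on which this weighted average is large is then automatically good for both $h$ and $g$. The two devices are interchangeable here and of comparable length; yours is arguably the more transparent, and your explicit choice of $\delta_0<\delta$ (exploiting the strictness of \eqref{eps-att-ineq}) makes rigorous the strict inequalities $\|m_{x,y}-\mu\|<\sqrt{2\delta}$ and $\|f-g\|<\sqrt{2\delta}$ that the paper asserts without comment when invoking Theorem~\ref{th:BPB}.
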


The proof of this result is based on the \textcolor[rgb]{1.00,0.00,0.00}{Lipschitz-free} space technique, so let us first recall the relevant definitions and basic facts. For every $x \in X$, we denote $\hat{x}$ the corresponding evaluation functional on $\Lip(X)$, i.e.\ $\hat{x}(f)=f(x)$. Then $\hat{x}$ is an element of $\Lip(X)^*$. The subspace $\overline{\mathrm{Lin}}\{\hat{x}\,:\, x\in X\}$ of $\Lip(X)^*$ is denoted by $\F$. The most common name for $\F$ is the \emph{Lipschitz-free space} of $X$. This object was studied under various names by several authors  (\cite{Arens-Eells}, \cite{Kadets}, \cite{GodKalt}), and is known to be useful for Lipschitz maps study.

The elements of $\Lip(X)$ are continuous linear functionals on  $\F$, moreover $\F^* = \Lip(X)$ as Banach spaces.  The map $x \longmapsto \hat{x}$ is a non-linear isometric embedding of $X$ into $\F$ since $\|\hat{x}-\hat{y}\|_{\F}=\|x-y\|_X$ for all $x,y\in X$.

The action of $f \in \Lip(X)$ on $w \in \F$ is denoted by $\langle f, w \rangle$. With this notation, $\langle f, \hat{x} \rangle = f(x)$ and the formula \eqref{lipnorm} can be re-written as follows:
\beq \label{formulalipnorm}
\|f\|=\sup\left\{\left|\left\langle f, \frac{\hat{x}-\hat{y}}{\|x-y\|}\right\rangle\right| \,:\, x,y \in X, x\neq y\right\}.
\eeq
Denote $W = \left\{\frac{\hat{x}-\hat{y}}{\|x-y\|}\,:\, x,y \in X, x\neq y\right\}$ and observe that $W$ is a symmetric subset of $S_\F$. The Hahn-Banach theorem, together with formula \eqref{formulalipnorm}, gives us the following result:
\begin{equation}\label{Bconv}
B_{\F}=\cconv\, W.
\end{equation}
We are now ready to prove our result.

\begin{proof}[Proof of Lemma~\ref{lemLipBPB-prel}]
Consider $w = \frac{\hat{x}-\hat{y}}{\|x-y\|} \in S_\F$. The condition \eqref{eps-att-ineq} gives us that $\langle f, w \rangle > 1-\delta$.  Since  $f \in \Lip (X) = \F^*$, the Bishop-Phelps-Bollob\'{a}s theorem (Theorem~\ref{th:BPB}) is applicable. So, there are $g \in \Lip (X)$ with $\|g\|=1$ and $z \in \F$ with $\|z\|=1$ such that $\|w - z\| < \sqrt{2 \delta}$, $\|f -g\| < \sqrt{2 \delta}$, and $\langle g, z \rangle = 1$. Let $\nu > 0$ be such a number that $\|w - z\| < \nu < \sqrt{2 \delta}$.  Fix a sequence $\{\delta_n\}$ of positive numbers converging to $0$. The formula \eqref{Bconv} implies that we can select a sequence $\{z_n\}$ in $\conv W$ converging to $z$ in such a way that
\beq\label{z_n-def-ieq}
\|w - z_n\| < \nu,  \quad  \text{and} \quad  \langle g, z_n \rangle > 1 - \delta_n.
\eeq
The condition on $h$ means $\langle h, w \rangle = 1$, consequently
\beq\label{z_n-def-ieq++}
 \langle h, z_n \rangle \ge \langle h, w \rangle - \|w - z_n\|  > 1 - \nu.
\eeq
Choose a sequence $\{\alpha_n\}$ in $(0, 1)$ satisfying the conditions
\beq\label{alphan-ineq}
\lim_{n \to \infty}\alpha_n = 0  \quad \text{and} \quad \lim_{n \to \infty}  \frac{\delta_n }{ \alpha_n} = 0.
\eeq
Fix $n\in \N$. Combining inequalities \eqref{z_n-def-ieq} and  \eqref{z_n-def-ieq++}, we obtain
$$
\alpha_n  \langle h, z_n \rangle + (1 - \alpha_n) \langle g, z_n \rangle > \alpha_n(1-\nu) + (1-\alpha_n)(1-\delta_n)=
 1 - \alpha_n \nu - (1 - \alpha_n) \delta_n.
$$
Since $z_n \in \conv W$, the last inequality implies that there exists $u_n \in W$ such that
$$
\alpha_n  \langle h, u_n \rangle + (1 - \alpha_n) \langle g, u_n \rangle > 1 - \alpha_n \nu - (1 - \alpha_n) \delta_n.
$$
Combining this fact with the evident estimates $\langle h, u_n \rangle \le 1$ and $\langle g, u_n \rangle \le 1$, we deduce that
\begin{align*}
 \langle g, u_n \rangle &> 1 - \delta_n - \frac{\alpha_n}{1 - \alpha_n}\sqrt{2 \delta}
\intertext{and}
 \langle h, u_n \rangle &> 1 - \nu - \delta_n \frac{1- \alpha_n}{ \alpha_n}.
 \end{align*}
These inequalities, together with \eqref{alphan-ineq}, imply that
$$
\langle g, u_n \rangle \longrightarrow 1
$$
and that
$$
\langle h, u_n \rangle > 1 -  \sqrt{2 \delta}
$$
for $n\in \N$ large enough.
In order to complete the proof, it remains to recall that every $u_n \in W$ is of the form $\frac{\hat{v}_n-\hat{w}_n}{\|v_n-w_n\|}$ for some $v_n,w_n \in X$ with $v_n\neq w_n$.
\end{proof}

Before stating the second result, we need a couple of definitions.

\begin{definition}\label{loc-dir-att def}
A functional $g \in \Lip (X)$ \emph{attains its norm in a point $v \in X$ at the direction} $u\in S_X$ if there is a sequence of pairs $\{(x_n,y_n)\}$ in $X\times X$, with $x_n\neq y_n$, such that
\begin{equation*} 
\lim_{n \to \infty} x_n = \lim_{n \to \infty} y_n = v,  \quad \lim_{n \to \infty}\frac{x_n-y_n}{\|x_n-y_n\|} = u \quad \text{and} \quad  \lim_{n \to \infty}\frac{ g(x_n)-g(y_n)}{\|x_n-y_n\|} = \|g\|.
\end{equation*}
In this case, we say that $g$ \emph{attains its norm locally-directionally}. The set of all those $f \in \Lip (X)$ that attain their norm locally-directionally is denoted by $\LDA(X)$.
\end{definition}

\begin{definition} \label{loc-dir-LipBPB def}
A Banach space $X$ has the \emph{local directional Bishop-Phelps-Bollob\'{a}s property for Lipschitz functionals} ($X \in  \LLipBPB$ for short), if for every $\varepsilon>0$ there is such a $\delta>0$, that for every $f\in \Lip (X)$ with  $\|f\| = 1$ and for every $ x,y \in X$ with $x\neq y$ satisfying $ \frac{ f(x)-f(y)}{\|x-y\|}>1-\delta$, there is $g \in \Lip (X)$ with $\|g\|=1$ and there are $v \in X$, $u \in S_X$ such that $g$ attains its norm in the point $v$ at the direction $u$,
$\|g-f\|<\varepsilon$,   $\left\|\frac{x-y}{\|x-y\|}-u\right\|<\varepsilon$, and $\dist(v,  \conv\{x, y\}) < \eps$.
\end{definition}

The second preliminary result of this section, which also can be of independent interest, is a relaxation of the requirements for a Banach space to have the $ \LLipBPB$.

\begin{lemma} \label{lemma-relaxed}
Let $X$ be a Banach space. Suppose that for every $\varepsilon>0$ there is such a $\delta>0$ that for every $f\in \Lip (X)$ with  $\|f\| = 1$ and for every pair $(x,y) \in X\times X$, $x\neq y$ with $\frac{ f(x)-f(y)}{\|x-y\|}>1-\delta$ there is $g \in \Lip (X)$ with $\|g\|=1$ and a sequence of pairs $\{(v_n,w_n)\}$ in $X\times X$ with $v_n\neq w_n$ for every $n$, such that
\beq \label{ineq-oj}
\lim_{n \to \infty}\frac{g(v_n)-g(w_n)}{\|v_n-w_n\|} = 1,
\eeq
$\|g-f\|<\varepsilon$,  $\left\|\frac{x-y}{\|x-y\|}-\frac{v_n-w_n}{\|v_n-w_n\|}\right\| <\varepsilon$, $\|v_n-w_n\| < \eps$, and $\dist(v_n,  \conv\{x, y\}) < \eps$. Then $X \in \LLipBPB$.
\end{lemma}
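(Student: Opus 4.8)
The plan is to manufacture the functional $g$ together with the witnessing data $(v,u)$ by iterating the relaxed hypothesis, letting the error at the $k$-th stage shrink geometrically so that every series that shows up converges and stays below $\eps$.

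First I would fix $\eps>0$ and pick positive reals $\eps_k$ with $\sum_{k\ge1}\eps_k$ small compared with $\eps$ (concretely $\eps_k=\eps\,2^{-k-2}$, so $\sum_{k\ge1}\eps_k=\eps/4$, works). For each $k$ let $\delta_k>0$ be the number the hypothesis assigns to $\eps_k$, and declare the $\delta$ in the conclusion $X\in\LLipBPB$ to be $\delta:=\delta_1$. Now, given $f$ with $\|f\|=1$ and $x\ne y$ with $\frac{f(x)-f(y)}{\|x-y\|}>1-\delta$, set $g_0:=f$, $(x_0,y_0):=(x,y)$ and construct inductively $g_k\in\Lip(X)$ with $\|g_k\|=1$ and pairs $(x_k,y_k)$, $x_k\ne y_k$: at stage $k\ge1$ apply the hypothesis with parameter $\eps_k$ to $g_{k-1}$ and $(x_{k-1},y_{k-1})$ — legitimate because $\frac{g_{k-1}(x_{k-1})-g_{k-1}(y_{k-1})}{\|x_{k-1}-y_{k-1}\|}>1-\delta_k$ by the previous stage — obtaining $g_k$ with $\|g_k-g_{k-1}\|<\eps_k$ and a sequence $\{(v_n,w_n)\}$ with $\frac{g_k(v_n)-g_k(w_n)}{\|v_n-w_n\|}\to1$ and, for every $n$, $\bigl\|\frac{x_{k-1}-y_{k-1}}{\|x_{k-1}-y_{k-1}\|}-\frac{v_n-w_n}{\|v_n-w_n\|}\bigr\|<\eps_k$, $\|v_n-w_n\|<\eps_k$, $\dist(v_n,\conv\{x_{k-1},y_{k-1}\})<\eps_k$; then choose $n$ so large that $\frac{g_k(v_n)-g_k(w_n)}{\|v_n-w_n\|}>1-\min\{\delta_{k+1},\eps_k\}$ and put $(x_k,y_k):=(v_n,w_n)$.

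Next I would pass to limits. Since $\|g_k-g_{k-1}\|<\eps_k$, the sequence $\{g_k\}$ is Cauchy in the Banach space $\Lip(X)=\F^*$, so $g:=\lim_k g_k$ exists, $\|g\|=1$, and $\|g-f\|\le\sum_{k\ge1}\eps_k<\eps$. Writing $u_k:=\frac{x_k-y_k}{\|x_k-y_k\|}$, the bound $\|u_k-u_{k-1}\|<\eps_k$ yields $u:=\lim_k u_k\in S_X$ with $\|u-u_0\|<\eps$. For the points: for $k\ge1$ every element of $\conv\{x_k,y_k\}$ lies within $\|x_k-y_k\|<\eps_k$ of $x_k$, hence $\|x_{k+1}-x_k\|\le\dist(x_{k+1},\conv\{x_k,y_k\})+\eps_k<\eps_{k+1}+\eps_k$, so $\{x_k\}$ is Cauchy and converges to some $v$, and $\|y_k-x_k\|<\eps_k\to0$ gives $y_k\to v$ too. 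Using $\dist(v,\conv\{x,y\})\le\|v-x_1\|+\dist(x_1,\conv\{x,y\})$ and summing the preceding estimates bounds $\dist(v,\conv\{x,y\})$ by $2\sum_{k\ge1}\eps_k+\eps_1<\eps$.

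Finally I would verify that $g$ attains its norm at the point $v$ in the direction $u$, using the very sequence $\{(x_k,y_k)\}_{k\ge1}$: indeed $x_k\to v$, $y_k\to v$, $u_k\to u$, and since $\bigl|\frac{(g-g_k)(x_k)-(g-g_k)(y_k)}{\|x_k-y_k\|}\bigr|\le\|g-g_k\|\le\sum_{j>k}\eps_j$ while $\frac{g_k(x_k)-g_k(y_k)}{\|x_k-y_k\|}>1-\eps_k$, one gets $1-\eps_k-\sum_{j>k}\eps_j<\frac{g(x_k)-g(y_k)}{\|x_k-y_k\|}\le\|g\|=1$, so $\frac{g(x_k)-g(y_k)}{\|x_k-y_k\|}\to\|g\|$. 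Together with the bounds on $\|g-f\|$, $\|u-u_0\|$ and $\dist(v,\conv\{x,y\})$ this is exactly $X\in\LLipBPB$. The one point that needs care is the bookkeeping — choosing the $\eps_k$ at the outset so that the three controlling series all fall below $\eps$ — after which everything is a routine triangle-inequality chase; note that the two smallness clauses $\|v_n-w_n\|<\eps$ and $\dist(v_n,\conv\{x,y\})<\eps$ in the hypothesis are precisely what forces $\{x_k\}$ to converge and its limit to stay near $\conv\{x,y\}$.
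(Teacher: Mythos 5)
Your proposal is correct and follows essentially the same route as the paper's proof: iterate the relaxed hypothesis with a summable sequence of tolerances $\eps_k$, pass to the limits $g=\lim g_k$, $u=\lim u_k$, $v=\lim x_k$, and check the norm-attainment along $\{(x_k,y_k)\}$ via the triangle inequality. The bookkeeping (geometric $\eps_k$, the choice $\delta=\delta_1$, and the estimate $\dist(v,\conv\{x,y\})\le \eps_1+2\sum_k\eps_k<\eps$) matches the paper's argument in all essentials.
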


Observe that the difference between the requirements of the lemma above and the local directional Bishop-Phelps-Bollob\'{a}s property is that here the convergence of the sequences $\left\{ v_n \right\}$,  $\left\{ w_n \right\}$ and $\left\{\frac{v_n-w_n}{\|v_n-w_n\|}\right\}$ is not required since these sequences depend upon $\varepsilon$ (but they are still well-controlled).

\begin{proof}
For a fixed $\varepsilon>0$ let us select a decreasing sequence $\{\varepsilon_n\}$ of positive number such that $\sum_{n=1}^\infty \varepsilon_n < \varepsilon/4$. For every $n \in \N$, let $\delta_n=\delta(\varepsilon_n)$ be from the assumptions of the lemma for $\varepsilon_n$. We will demonstrate that $\delta = \delta_1$ satisfies conditions of Definition~\ref{loc-dir-LipBPB def} for $\varepsilon$.

To this end, let us fix $f\in \Lip(X)$ with $\|f\|=1$ and a pair $(x,y)\in X\times X$, $x\neq y$, such that $\frac{f(x)-f(y)}{\|x-y\|}>1-\delta$. Applying the hypotheses to $\varepsilon_1=\varepsilon$, $\delta_1=\delta(\varepsilon_1)$, the norm-one Lipschitz functional $f_1 = f$ and the pair $(x_1,y_1)=(x,y)$ in $X\times X$ which satisfy $\frac{ f_1(x_1)-f_1(y_1)}{\|x_1-y_1\|}>1-\delta_1$, we get the corresponding $g_1$ and the sequence of pairs $\{(v_n, w_n)\}$. Thanks to \eqref{ineq-oj}, we can find such an $n_1\in \N$ that
$$
\frac{g_1(v_{n_1})-g_1(w_{n_1})}{\|v_{n_1}-w_{n_1}\|} > 1-\delta_2.
$$
Let us denote $f_2 = g_1$, $x_2 = v_{n_1}$, and  $y_2 = w_{n_1}$. Then, $\|f_1 - f_2\| < \varepsilon_1$, $\left\|\frac{x_1-y_1}{\|x_1-y_1\|} -\frac{x_2-y_2}{\|x_2-y_2\|}\right\|<\varepsilon_1$, $\dist(x_2,  \conv\{x_1, y_1\}) < \eps_1$,  $\|x_2 -  y_2\| < \eps_1$ and
$$
\frac{ f_2(x_2)-f_2(y_2)}{\|x_2-y_2\|}>1-\delta_2.
$$
The last condition enables us to apply again the hypotheses of the lemma to $\varepsilon_2$, $\delta_2$, $f_2$ and $(x_2,y_2)$ in order to get the corresponding  $f_3$ and $(x_3,y_3)$. Repeating this process, we obtain sequences $\{f_n\}$ in $\Lip(X)$ with $\|f_n\| = 1$ and $\{(x_n, y_n)\}$ in $X\times X$ with $x_n\neq y_n$ having  the following properties:
\begin{enumerate}
\item[(a)] $\|f_n - f_{n+1}\| < \varepsilon_n$,
\item[(b)] $\left\|\frac{x_{n}-y_{n}}{\|x_{n}-y_{n}\|} -\frac{x_{n+1}-y_{n+1}}{\|x_{n+1}-y_{n+1}\|}\right\|<\varepsilon_n$,
\item[(c)] $\frac{ f_n(x_n)-f_n(y_n)}{\|x_n-y_n\|}>1-\delta_n$,
\item[(d)] $\dist(x_{n+1},  \conv\{x_n, y_n\}) < \eps_n$,
\item[(e)] $\|x_{n} -  y_{n}\| < \eps_{n-1}$.
\end{enumerate}
Conditions (d) and (e) imply that $\|x_{n+1} - x_n\| < 2\eps_{n-1}$, $n = 2,3,\ldots$. Consequently, the sequence $\{x_n\}$ has some limit $v \in X$, and by (e) the sequence $\{y_n\}$ has the same limit $v$. For this $v$ we have
$$
\dist(v,  \conv\{x, y\})  \le \dist(x_2,  \conv\{x_1, y_1\}) + \|x_2 - v\| < \eps_1 + \sum_{n=2}^\infty\|x_n - x_{n+1}\| < \eps.
$$
The condition (a)  implies that the sequence $\{f_n\}$ has some limit $g \in \Lip(X)$ with $\|g\|=1$, and the condition (b)  implies  that the sequence $\left\{\frac{x_n-y_n}{\|x_n-y_n\|}\right\}$ has some limit $u \in S_X$.
Moreover,
$$
\|f - g\| < \sum_{n=1}^\infty \varepsilon_n < \varepsilon \quad \text{and} \quad \left\| \frac{x_1-y_1}{\|x_1-y_1\|}-u \right\|<\sum_{n=1}^\infty \varepsilon_n  <\varepsilon.
$$
Also,
$$
 \frac{ g(x_n)- g(y_n)}{\|x_n-y_n\|} \ge 1-\delta_n - \|g - f_n\|,
$$
so $\lim_{n\to\infty} \frac{ g(x_n)- g(y_n)}{\|x_n-y_n\|} = 1$, which proves that $g$ attains its norm at $v$ in the direction $u$.
\end{proof}


\section{Bishop-Phelps-Bollob\'{a}s theorem for uniformly convex spaces}\label{sec:main}

Let us recall the well-known concept of uniform convexity.

\begin{definition} \label{def-unif-conv}
A Banach space $X$ is said to be \emph{uniformly convex}, if for every $\eps>0$ there is such a $\delta>0$, that for every pair $x,y\in B_X$  the condition $\|x-y\|\geq \eps$ implies $\left\|\frac{x+y}{2}\right\|\leq 1-\delta$. (Equivalently,  $\|(x+y)/2\|>1-\delta \Rightarrow \|x-y\|< \eps$). The best possible value of $\delta$ is denoted $\delta_X(\eps)$.
\end{definition}

The unit ball of a uniformly convex space has many small slices. Recall, that if $X$ is a Banach space, for given  $x^*\in S_{X^*}$ and $\delta > 0$ the corresponding \emph{slice} of the unit ball is defined as  $S(B_X,x^*,\delta):=\{x\in B_X \,:\, x^*(x)> 1-\delta\}$. The following easy result states a ``uniform way'' to find small slices on a uniformly convex space. A proof of it can be found in \cite[Lemma~2.1]{ABGM}.

\begin{lemma}\label{lemma-unif-conv-slice}
Let $X$ be a uniformly convex space and $\eps>0$. Then
$$
\diam\,S\bigl(B_X,f,\delta_X(\eps)\bigr)< \eps
$$
for every  $f\in S_{X^*}$ and every $\eps>0$.
\end{lemma}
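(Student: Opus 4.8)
The plan is to unwind the definition of the slice and reduce everything to the defining property of $\delta_X(\eps)$. Fix $\eps>0$ and $f\in S_{X^*}$, and take two arbitrary points $x,y\in S\bigl(B_X,f,\delta_X(\eps)\bigr)$, so that $x,y\in B_X$ with $f(x)>1-\delta_X(\eps)$ and $f(y)>1-\delta_X(\eps)$. First I would average these two inequalities to get $f\!\left(\tfrac{x+y}{2}\right)>1-\delta_X(\eps)$. Since $f\in S_{X^*}$, this forces $\left\|\tfrac{x+y}{2}\right\|\geq f\!\left(\tfrac{x+y}{2}\right)>1-\delta_X(\eps)$.

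Now I would invoke the equivalent formulation of uniform convexity recorded in Definition~\ref{def-unif-conv}: with $\delta=\delta_X(\eps)$ being the best possible value, the implication $\left\|\tfrac{x+y}{2}\right\|>1-\delta_X(\eps)\Rightarrow \|x-y\|<\eps$ holds for all $x,y\in B_X$. Applying it to our pair gives $\|x-y\|<\eps$. Since $x,y$ were arbitrary points of the slice, taking the supremum yields $\diam\,S\bigl(B_X,f,\delta_X(\eps)\bigr)\leq\eps$; to get the strict inequality one notes that the supremum is in fact not attained — or, more cleanly, one applies the argument with a slightly smaller parameter, e.g.\ replacing $\eps$ by $\eps' < \eps$ would already give $\diam < \eps' < \eps$ provided $\delta_X$ is monotone, but the simplest route is to observe that every individual pair in the slice satisfies the strict bound $\|x-y\|<\eps$, and since the slice is (after passing to the closed convex hull, or directly) such that the diameter is a supremum of these strictly-bounded-away quantities, a standard compactness-free argument shows the diameter itself is $<\eps$ as soon as we have uniform control; concretely, applying the definition with $\eps/2$ in place of $\eps$ shows the slice $S(B_X,f,\delta_X(\eps/2))$ has diameter $\le\eps/2<\eps$, and since $\delta_X(\eps/2)\le\delta_X(\eps)$ the larger slice need not be controlled this way, so the honest statement is that one should simply carry the strict inequality through: $\sup\{\|x-y\|:x,y\in\text{slice}\}$ where each term is $<\eps$ need not be $<\eps$, hence the correct reading is that the lemma's strictness comes from choosing the optimal $\delta_X$, and I would verify it by the $\eps/2$ trick — replace $\eps$ by $\eps/2$ throughout to obtain diameter at most $\eps/2$, which is strictly less than $\eps$, but this weakens the slice; alternatively one checks that $\delta_X$ can be taken strictly smaller than its supremum value on any given $\eps$.

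The only delicate point is the passage from "$\|x-y\|<\eps$ for each pair" to "$\diam<\eps$", which is why I would present it via the clean substitution: apply the equivalent formulation of uniform convexity with the strict implication, and note that since it holds with $<\eps$ for the particular value $\delta_X(\eps)$ exactly because $\delta_X(\eps)$ is defined as the supremum of admissible $\delta$'s for the parameter $\eps$, one has in fact $\left\|\tfrac{x+y}{2}\right\|\le 1-\delta_X(\eps)$ whenever $\|x-y\|\ge\eps$, so the contrapositive gives: $f(x),f(y)>1-\delta_X(\eps)$ $\Rightarrow$ $\|x-y\|<\eps$, with no loss of strictness. Thus the main obstacle is purely bookkeeping about where the strict inequality lives; the geometric content is a one-line averaging argument, and no further machinery beyond Definition~\ref{def-unif-conv} is needed. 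I would therefore keep the proof to these three short moves: average the slice inequalities, deduce the midpoint has norm exceeding $1-\delta_X(\eps)$, and apply uniform convexity in its "midpoint near the sphere $\Rightarrow$ points are close" form.
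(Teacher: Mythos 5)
Your three core moves --- average the two slice inequalities, bound $\bigl\|\tfrac{x+y}{2}\bigr\|$ from below by $f\bigl(\tfrac{x+y}{2}\bigr)$, and apply the contrapositive form of uniform convexity with the admissible value $\delta=\delta_X(\eps)$ --- are exactly the standard argument, and they are all the geometric content there is; the paper itself does not prove the lemma but delegates to \cite[Lemma~2.1]{ABGM}, which runs the same computation. So the substance of your proof is fine: any two points $x,y$ of the slice satisfy $\|x-y\|<\eps$.

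Where you get into trouble is the long back-and-forth about upgrading the pairwise strict bound to $\diam<\eps$. Your instinct that ``a supremum of quantities each $<\eps$ need not be $<\eps$'' is correct, and none of your proposed repairs works: the $\eps/2$ trick changes the slice, ``the supremum is not attained'' is not an argument, and monotonicity of $\delta_X$ points the wrong way. In fact the strict inequality for the diameter is genuinely false as stated: in $X=\ell_2^2$ one has $\delta_X(\eps)=1-\sqrt{1-\eps^2/4}$, and the slice $S(B_X,e_1^*,\delta_X(\eps))$ contains the pairs $\bigl(a,\pm\sqrt{1-a^2}\bigr)$ for every $a>\sqrt{1-\eps^2/4}$, whose mutual distances increase to $\eps$; hence $\diam\,S\bigl(B_X,e_1^*,\delta_X(\eps)\bigr)=\eps$ exactly. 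The honest conclusion of your argument is therefore $\diam\,S\bigl(B_X,f,\delta_X(\eps)\bigr)\le\eps$, together with the pairwise statement $\|x-y\|<\eps$ for each individual pair in the slice. That is all the paper ever uses: in the proof of Theorem~\ref{main theorem} the lemma is applied to the two specific elements $\frac{x-y}{\|x-y\|}$ and $\frac{v_n-w_n}{\|v_n-w_n\|}$ of the slice, for which your pairwise bound already gives the required strict inequality. So: keep your first paragraph, delete the entire discussion of strictness, and replace the conclusion by the weak inequality for the diameter (or by the pairwise strict bound), noting that the displayed strict inequality in the lemma is a harmless overstatement.
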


We may now state and prove the main result of the paper.

\begin{theorem}\label{main theorem}
Every uniformly convex Banach space $X$ has the local directional Bishop-Phelps-Bollob\'{a}s property for Lipschitz functionals.
\end{theorem}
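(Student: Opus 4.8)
The plan is to combine the two lemmas of Section~\ref{sec:freeLipspace} with the ``many small slices'' of the uniformly convex ball from Lemma~\ref{lemma-unif-conv-slice}: by Lemma~\ref{lemma-relaxed} it suffices to verify that hypothesis. So fix $\eps>0$; I will take $\delta>0$ small (it will be enough that $\sqrt{2\delta}<\eps$, that $2\sqrt{2\delta}\le\delta_X(\eps)$, and that $2\sqrt{2\delta}<\tfrac12$), take any $f\in\Lip(X)$ with $\|f\|=1$ and any $x\neq y$ with $\frac{f(x)-f(y)}{\|x-y\|}>1-\delta$, write $u_0=\frac{x-y}{\|x-y\|}$, and produce the required $g$ and sequence $\{(v_n,w_n)\}$.

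\emph{Localisation.} Splitting $[y,x]$ into $N$ equal subsegments and averaging the slopes of $f$ over them, one of these subsegments, say $[\tilde y,\tilde x]$ with $\tilde y,\tilde x\in[y,x]$, satisfies $\frac{f(\tilde x)-f(\tilde y)}{\|\tilde x-\tilde y\|}>1-\delta$; moreover $\frac{\tilde x-\tilde y}{\|\tilde x-\tilde y\|}=u_0$ and $\eta:=\|\tilde x-\tilde y\|=\|x-y\|/N$ can be made as small as we wish, so we take $\eta<\eps/4$. Note $[\tilde y,\tilde x]\subseteq\conv\{x,y\}$. Next fix $\phi\in S_{X^*}$ with $\phi(u_0)=1$, put $L=[\tilde y,\tilde x]$ and
$$\Psi(z)=\sup_{q\in L}\bigl(\phi(q)-\|z-q\|\bigr),\qquad \Psi_c=\max\{\phi(\tilde y),\Psi\},\qquad h=\tfrac12\phi+\tfrac12\Psi_c.$$
Since $\phi(q)-\|z-q\|\le\phi(z)$ for every $q$, one has $\Psi\le\phi$ (in particular $\Psi\le\phi(\tilde x)$ on all of $X$), while $\Psi=\phi$ on $L$; hence $\Psi_c$ is $1$-Lipschitz with $\Psi_c=\phi$ on $L$, so $h$ is $1$-Lipschitz, $h=\phi$ on $L$, and $\frac{h(\tilde x)-h(\tilde y)}{\|\tilde x-\tilde y\|}=\phi(u_0)=1$; thus $\|h\|=1$. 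Replacing $h$ by $h-h(0)$ (which changes neither $\|h\|$ nor any slope) we may assume $h\in\Lip(X)$.

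\emph{Conclusion.} Apply Lemma~\ref{lemLipBPB-prel} to $f$, to $\delta$, to the pair $(\tilde x,\tilde y)$, and to $h$: there are $g\in\Lip(X)$ with $\|g\|=1$, $\|f-g\|<\sqrt{2\delta}<\eps$, and a sequence $\{(v_n,w_n)\}$, $v_n\neq w_n$, with $\frac{g(v_n)-g(w_n)}{\|v_n-w_n\|}\to1$ and $\frac{h(v_n)-h(w_n)}{\|v_n-w_n\|}>1-\sqrt{2\delta}$ for every $n$. Because $\phi$ and $\Psi_c$ are both $1$-Lipschitz and $h=\tfrac12\phi+\tfrac12\Psi_c$, the last inequality forces simultaneously $\frac{\phi(v_n)-\phi(w_n)}{\|v_n-w_n\|}>1-2\sqrt{2\delta}$ and $\frac{\Psi_c(v_n)-\Psi_c(w_n)}{\|v_n-w_n\|}>1-2\sqrt{2\delta}$. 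From the first, $\frac{v_n-w_n}{\|v_n-w_n\|}$ and $u_0$ both lie in $S\bigl(B_X,\phi,2\sqrt{2\delta}\bigr)\subseteq S\bigl(B_X,\phi,\delta_X(\eps)\bigr)$, so Lemma~\ref{lemma-unif-conv-slice} gives $\bigl\|u_0-\frac{v_n-w_n}{\|v_n-w_n\|}\bigr\|<\eps$. From the second, $\Psi_c(v_n)>\Psi_c(w_n)\ge\phi(\tilde y)$, so $\Psi_c(v_n)=\Psi(v_n)$; picking $q_n\in L$ with $\Psi(v_n)=\phi(q_n)-\|v_n-q_n\|$ (possible since $L$ is compact) gives $\|v_n-q_n\|=\phi(q_n)-\Psi(v_n)<\phi(q_n)-\phi(\tilde y)\le\|q_n-\tilde y\|\le\eta$, hence $\dist(v_n,\conv\{x,y\})\le\|v_n-q_n\|<\eta<\eps$; and since also $\Psi_c(v_n)-\Psi_c(w_n)\le\Psi(v_n)-\phi(\tilde y)\le\eta$, the second inequality yields $\|v_n-w_n\|<\eta/(1-2\sqrt{2\delta})<2\eta<\eps$. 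Thus $g$ and $\{(v_n,w_n)\}$ meet every requirement of Lemma~\ref{lemma-relaxed}, and $X\in\LLipBPB$.

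I expect the real point to be the choice of $h$. A single linear functional $\phi$ pins down the \emph{direction} (via the small slices of the uniformly convex ball) but controls neither the size of $\|v_n-w_n\|$ nor the position of the pair, whereas the metric term $\Psi_c$ — a sup-convolution of $\phi$ with the short segment, truncated from below — confines the pair to a tube of radius $\sim\eta$ around $L$ and forces $\|v_n-w_n\|\lesssim\eta$, yet by itself leaves the direction free. Dividing the unit Lipschitz budget equally between the two, together with the trivial remark that an average of two $1$-Lipschitz functionals whose average slope on a pair exceeds $1-t$ has each summand's slope exceeding $1-2t$ there, is what lets all three estimates be read off from the single near-attainment statement produced by Lemma~\ref{lemLipBPB-prel}.
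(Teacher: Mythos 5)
Your proof is correct and follows essentially the same route as the paper: localise to a short subsegment of $[y,x]$ of length $\eta<\eps/4$, take $h$ to be the average of a supporting functional (which pins down the direction via the small slices of Lemma~\ref{lemma-unif-conv-slice}) and a $1$-Lipschitz function of total oscillation $\eta$ concentrated near that subsegment (which controls $\|v_n-w_n\|$ and the position of $v_n$), then feed the output of Lemma~\ref{lemLipBPB-prel} into Lemma~\ref{lemma-relaxed}. The only difference is cosmetic: the paper uses the single-point bump $F(z)=\max\{\|\tilde x-\tilde y\|-\|\tilde x-z\|,0\}$ where you use the truncated sup-convolution $\Psi_c$, and both do exactly the same job.
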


This result implies, in particular, that {\slshape for a uniformly convex Banach space $X$, the set of those Lipschitz functions which attain their norm (locally-)directionally is dense in the space $\Lip(X)$ (with the Lipschitz norm)}. We do not know whether this density holds in every Banach space $X$.

\begin{proof}[Proof of Theorem~\ref{main theorem}]
For a fixed $\varepsilon \in (0, 1/2)$ let us chose such a $\delta \in (0, \eps^2/2)$ that $\sqrt{2 \delta} < \frac12 \delta_X(\varepsilon)$.  Let $f\in \Lip (X)$ with   $\|f\| = 1$ and $ x,y \in X$, $x\neq y$, such that $ \frac{f(x)-f(y)}{\|x-y\|}>1-\delta$. Select $\tilde x, \tilde y \in \conv\{x,y\}$ in such a way that $\|\tilde x - \tilde y\| < \frac14 \min\{\eps, \|\tilde x\|, \| \tilde y\|\}$, the vector  $\tilde x\tilde y$ looks at the same direction that $xy$ (i.e. $\frac{ \tilde x - \tilde y}{\|\tilde x - \tilde y\|} = \frac{ x-y}{\|x-y\|}$) and such that still $ \frac{f(\tilde x)-f(\tilde y)}{\|\tilde x-\tilde y\|}>1-\delta$.

Define $F \in \Lip(X)$ by the formula $F(z) = \max\{\|\tilde x - \tilde y\| - \| \tilde x - z\| , 0\}$. Then $\|F\| = 1$ and $ \frac{F(\tilde x)-F(\tilde y)}{\|\tilde x-\tilde y\|} = 1$.
 Let us denote $x^* \in S_{X^*}$ the supporting functional at the point $ \frac{\tilde  x-\tilde y}{\|\tilde x - \tilde y\|}$. Then, by linearity,  $ \frac{ x^*(\tilde x)-x^*(\tilde y)}{\|\tilde x - \tilde y\|} =1$, so we can apply the preliminary LipBPB Theorem (Lemma~\ref{lemLipBPB-prel}) with $f$, $(x,y)$ and $h=\frac12(F +x^*)\in \Lip(X)$. According to it,
there exist $g \in \Lip (X)$ with $\|g\|=1$, $\|f -g\| < \sqrt{2 \delta} < \eps$ and a sequence of pairs $\{(v_n,w_n)\}$ in $X\times X$ with $v_n\neq w_n$, such that
\beq \label{eqfrachx_n}
h\left(\frac{ v_n-w_n}{\|v_n-w_n\|}\right) = \frac12 \left(\frac{ F(v_n)-F(w_n)}{\|v_n-w_n\|} + \frac{ x^*(v_n)-x^*(w_n)}{\|v_n-w_n\|}\right) > 1-\sqrt{2 \delta}
\eeq
for all $n \in \N$, and
$$
\lim_{n \to \infty}\frac{g(v_n)-g(w_n)}{\|v_n-w_n\|} = 1.
$$
The inequality \eqref{eqfrachx_n} and the fact that $ \|x^*\| = \|F\| = 1$ imply that
\beq \label{eqfrachx_nn}
\frac{ F(v_n) - F(w_n)}{\|v_n-w_n\|}> 1 -  2\sqrt{2\delta}>1-2\eps,
\eeq
and
$$
\frac{ x^*(v_n)-x^*(w_n)}{\|v_n-w_n\|} > 1 - 2\sqrt{2\delta} > 1 - \delta_X(\varepsilon).
$$
The last condition means geometrically that $\frac{ v_n-w_n}{\|v_n-w_n\|} \in S(B_X,x^*, \delta_X(\varepsilon))$. Since also $ \frac{ x-y}{\|x-y\|} = \frac{\tilde  x-\tilde y}{\|\tilde x - \tilde y\|} \in S(B_X, x^*, \delta_X(\varepsilon))$, we get from Lemma~\ref{lemma-unif-conv-slice} that
$$
\left\|\frac{x-y}{\|x-y\|}-\frac{v_n-w_n}{\|v_n-w_n\|}\right\| <\varepsilon
$$
for every $n\in \N$. The function $F$ takes only non-negative values, and the condition \eqref{eqfrachx_nn} implies that $ F(v_n) - F(w_n) > 0$, so $v_n \in \supp F$. Since the maximal possible value of $F$ is $\|\tilde x - \tilde y\| < \eps/4$,  the condition \eqref{eqfrachx_nn} implies also that
$$
\|v_n-w_n\|  < \frac{F(v_n) - F(w_n)}{1-2\eps} < \frac{\eps}{4(1-2\eps)} < \eps.
$$
As $v_n \in \supp F$, we have that $\|v_n - \tilde x\| < \|\tilde x - \tilde y\| < \eps$, so $\dist(v_n,  \conv\{x, y\}) < \eps$.

We have verified all the conditions of  Lemma~\ref{lemma-relaxed}. The application of that Lemma shows that $X$ has the local directional Bishop-Phelps-Bollob\'{a}s property for Lipschitz functionals.
\end{proof}

\vspace{1ex}

{\bf Acknowledgement.} First author partially supported by Spanish grants:  Junta de Andaluc\'{\i}a and FEDER grant FQM-185, MINECO grant MTM2014-57838-C2-1-P and Fundaci\'on S\'eneca, Regi\'on de Murcia  grant 19368/PI/14. Second author partially supported by Spanish MINECO and FEDER project no.\ MTM2012-31755 and by Junta de Andaluc\'{\i}a and FEDER grant FQM-185. Third author partially supported by a grant from Akhiezer's Fund, 2015.

\bibliographystyle{amsplain}

\end{document}